
\documentclass[12pt]{elsarticle}
\usepackage[colorlinks=true, allcolors=blue]{hyperref} 
\usepackage{amsmath}
\usepackage{amsthm}
\usepackage{dsfont}
\usepackage{graphicx}

\newtheorem{theorem}{Theorem}[section]
\newtheorem{lemma}[theorem]{Lemma}

\newtheorem{remark}{Remark}

\usepackage{comment}


\usepackage{tikz}
\usepackage{tikz-cd}
\tikzcdset{scale cd/.style={every label/.append style={scale=#1},
cells={nodes={scale=#1}}}}

\usetikzlibrary{matrix}

\tikzset{ 
table/.style={
  matrix of nodes,
  row sep=-\pgflinewidth,
  column sep=-\pgflinewidth,
  nodes={rectangle,text width=2.4em,align=center},
  text depth=1.25ex,
  text height=2.5ex,
  nodes in empty cells
},
}

\usetikzlibrary{tikzmark}

\usepackage{tocloft}




\usepackage{amssymb}


\journal{Indagationes Mathematicae}

\begin{document}

\begin{frontmatter}



\title{On integral cohomology algebra of some oriented Grassmann manifolds}


\author[afil]{Milica Jovanovi\' c}
\ead{milica.jovanovic@matf.bg.ac.rs}

\affiliation[afil]{organization={University of Belgrade, Faculty of Mathematics},
            addressline={Studentski trg 16}, 
            city={Belgrade},
            postcode={11000}, 
            country={Serbia}}

\begin{abstract}
The integral cohomology algebra of $\widetilde G_{6,3}$ has been determined in the recent work of Kalafat and Yalçınkaya. We completely determine the integral cohomology algebra of $\widetilde G_{n,3}$ for $n=8$ and $n=10$. The main method used to describe these algebras is the Leray-Serre spectral sequence. We also illustrate this method by determining the integral cohomology algebra of $\widetilde G_{n,2}$ for $n$ odd.
\end{abstract}



\begin{keyword}



Grassmann manifold \sep Leray-Serre spectral sequence \sep Characteristic class
\\
\MSC[2020]   55R40 \sep 55R25 \sep 55T10

\end{keyword}

\end{frontmatter}



\section{Introduction} 

The importance of Grassmann manifolds has been established for a quite some time. The oriented Grassmann manifold $\widetilde G_{n,k}$ is defined as a space of oriented $k$-dimensional subspaces of $\mathbb R^n$. As the name suggests, it is indeed a manifold of dimension $k(n-k)$. Recently, much work has been done in understanding the $\mathbb Z_2$ cohomology algebra of oriented Grassmann manifolds. For $k=2$ it was determined in \cite{korbas} and in \cite{basu} the authors obtain an almost complete description of this algebra for $k=3$ and $n$ close to a power of two. On the other hand, it seems that very little is known about the integral cohomology of $\widetilde G_{n,k}$, especially about the algebra structure. In a paper by Lai \cite{lai}, the cohomology algebra of $\widetilde G_{n,2}$ for $n$ even has been determined and this result was extended to all $n$ by Van\v zura in \cite{vanzura}. We give a different proof of Van\v zura's result for $n$ odd in Theorem \ref{G2n+1,2}. For $k\geqslant 3$ far less is known. In their recent work \cite{kalafat}, Kalafat and Yalçınkaya have managed to describe the integral cohomology algebra of $\widetilde G_{6,3}$ by using the special Lagrangian locus of $\widetilde G_{6,3}$. In this paper, using the Leray-Serre spectral sequence, we give the complete structure of the integral cohomology algebra of $\widetilde G_{n,3}$ for $n=8$ and $n=10$ in Theorems \ref{ringG83} and \ref{ringG103} respectively. We also give a few general facts about the integral cohomology algebra of $\widetilde G_{n,3}$. Since $\widetilde G_{n,k}\approx \widetilde G_{n,n-k}$, we will always assume $n\geqslant 2k$.

In Section \ref{Gn,2} we apply the Leray-Serre spectral sequence in order to obtain a description of $H^*(\widetilde G_{n,2};\mathbb Z)$, for $n$ odd. In Section \ref{W2,1} we determine the integral cohomology algebra of auxiliary space $W_{2,1}^n$ for even $n$. This space is defined in \cite{basu} as 
\[W_{2,1}^n=\left\{ (\widetilde P,v)\in \widetilde G_{n,2}\times S^{n-1} \mid \text{$\widetilde P$ and $v$ are orthogonal}\right\}.\]
We do this by applying the Leray-Serre spectral sequence on the sphere bundle $S^{n-3}\rightarrow W^n_{2,1}\xrightarrow{} \widetilde G_{n,2}$. In Section \ref{propertiesGn3} we give some general properties of the integral cohomology of $\widetilde G_{n,3}$. Finally, in Sections \ref{G83} and \ref{G103} we determine the algebras $H^*(\widetilde G_{8,3};\mathbb Z)$ and $H^*(\widetilde G_{10,3};\mathbb Z)$ with the application of Leray-Serre spectral sequence on fiber bundles $S^2\rightarrow W_{2,1}^n\rightarrow \widetilde G_{n,3}$ and $SO(3)\rightarrow V_{n,3}\rightarrow \widetilde G_{n,3}$ as our main tool ($V_{n,k}$ is the Stiefel manifold of orthonormal $k$-frames in $\mathbb R^n$).

\section{Cohomology algebra  $H^*( {\widetilde{G}}_{n,2};\mathbb Z)$}
\label{Gn,2}

The cohomology algebra $H^*(\widetilde G_{n,2};\mathbb Z)$ depends on the parity of $n$. If $n$ is even, then this algebra is determined in Theorem 2 in \cite{lai}. Specifically, the following result holds.

\begin{theorem}[Lai]\label{lai2n}
The integral cohomology groups of $\widetilde G_{2n,2}$ are isomorphic to those of $\mathbb CP^{n-1}\times S^{2n-2}$, namely, for $0\leqslant k\leqslant 4n-4$,
\[H^k(\widetilde G_{2n,2};\mathbb Z)\cong\begin{cases}
\mathbb Z,& \text{if $k$ is even and $k\ne 2n-2$},\\
\mathbb Z\oplus\mathbb Z,& \text{if }k=2n-2,\\
0,& \text{if }k\text{ is odd.}
\end{cases}\]
Let $\widetilde\gamma$ be the oriented tautological bundle over $\widetilde G_{2n,2}$ and $e(\widetilde\gamma)$ its Euler class. As an algebra, $H^*(\widetilde G_{2n,2};\mathbb Z)$ is generated by $\widetilde\Omega=e(\widetilde\gamma)\in H^2(\widetilde G_{2n,2};\mathbb Z)$ and $\kappa\in H^{2n-2}(\widetilde G_{2n,2};\mathbb Z)$ with the relation $\widetilde\Omega^{n}=2\kappa\widetilde\Omega$.
Moreover, if  $\widetilde \gamma^\perp$ is the orthogonal complement of $\widetilde \gamma$, $\Omega=e(\widetilde\gamma^\perp)\in H^{2n-2}(\widetilde G_{2n,2};\mathbb Z)$ its Euler class and $\mu\in H^{4n-4}(\widetilde G_{2n,2};\mathbb Z)$ the fundamental class, then we have the following relations
\[\begin{split}
&\Omega+\widetilde\Omega^{n-1}=2\kappa,~\kappa\widetilde\Omega^{n-1}=(-1)^n\mu,~\kappa\Omega=\mu,\\
&\kappa^2=\frac{1+(-1)^{n-1}}{2}\mu,~\widetilde\Omega^{2n-2}=2(-1)^n\mu,~\Omega^2=2\mu.
\end{split}\]

\end{theorem}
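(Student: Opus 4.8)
The plan is to run the Leray--Serre spectral sequence, or equivalently the Gysin sequence, of the circle bundle $S^1=SO(2)\hookrightarrow V_{2n,2}\xrightarrow{\pi}\widetilde G_{2n,2}$ that sends an oriented orthonormal $2$-frame to its span. This bundle is the unit sphere bundle of $\widetilde\gamma$, so its Euler class is exactly $\widetilde\Omega=e(\widetilde\gamma)$. The necessary input is $H^*(V_{2n,2};\mathbb Z)$: since $V_{2n,2}$ is the unit tangent bundle of $S^{2n-1}$ and $S^{2n-1}$ has vanishing Euler characteristic, the Gysin sequence of $S^{2n-2}\hookrightarrow V_{2n,2}\to S^{2n-1}$ splits and gives $H^k(V_{2n,2};\mathbb Z)\cong\mathbb Z$ for $k\in\{0,\,2n-2,\,2n-1,\,4n-3\}$ and $0$ otherwise. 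I would compute this first.

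Feeding this into the Gysin sequence
\[\cdots\to H^{k-2}(\widetilde G_{2n,2})\xrightarrow{\cup\widetilde\Omega}H^k(\widetilde G_{2n,2})\xrightarrow{\pi^*}H^k(V_{2n,2})\xrightarrow{\pi_*}H^{k-1}(\widetilde G_{2n,2})\to\cdots,\]
the key observation is that $\cup\widetilde\Omega\colon H^{k-2}\to H^k$ is an isomorphism whenever $H^k(V_{2n,2})=H^{k-1}(V_{2n,2})=0$. This forces $H^{2j}(\widetilde G_{2n,2})=\mathbb Z\langle\widetilde\Omega^{\,j}\rangle$ in the stable range and kills all odd groups below the middle. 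At $k=2n-2$ the extra $\mathbb Z$ in $H^{2n-2}(V_{2n,2})$ produces a short exact sequence $0\to\mathbb Z\xrightarrow{\cup\widetilde\Omega}H^{2n-2}(\widetilde G_{2n,2})\to\mathbb Z\to 0$, which splits and yields the doubling $H^{2n-2}\cong\mathbb Z^2$. I would take $\kappa$ to be a lift of a generator of $H^{2n-2}(V_{2n,2})$, so that $\{\widetilde\Omega^{\,n-1},\kappa\}$ is a basis, normalised so that $\Omega+\widetilde\Omega^{\,n-1}=2\kappa$ holds. Poincaré duality (the manifold is closed orientable of dimension $4n-4$) then fixes the top class $\mu$ and the groups above the middle.

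For the ring structure the crux is the relation $\widetilde\Omega^{\,n}=2\kappa\widetilde\Omega$, and this is where I expect the main difficulty: the factor $2$ is encoded in the fibre-integration map $\pi_*\colon H^{2n-1}(V_{2n,2})\to H^{2n-2}(\widetilde G_{2n,2})$ around the middle degree, and pinning down its image in the basis $\{\widetilde\Omega^{\,n-1},\kappa\}$ requires careful bookkeeping. The cleanest way to see the factor $2$ is to use the classical diffeomorphism $\widetilde G_{2n,2}\cong Q_{2n-2}$ with the complex quadric, under which $\widetilde\Omega$ corresponds to the hyperplane class $h$ and the two families of maximal linear subspaces (rulings) give classes $a,b$ with $a+b=h^{\,n-1}=\widetilde\Omega^{\,n-1}$ and $ha=hb$ a generator of $H^{2n}$; then $\widetilde\Omega^{\,n}=h(a+b)=2(ha)=2\kappa\widetilde\Omega$ after identifying $\kappa$ with one ruling.

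The remaining identities I would obtain as follows. In the quadric model $\Omega=e(\widetilde\gamma^\perp)$ is the primitive class $a-b$, which changes sign under the orientation-reversing deck involution that swaps the two rulings, giving $\Omega=2\kappa-\widetilde\Omega^{\,n-1}$. The products $\kappa\Omega=\mu$, $\kappa\widetilde\Omega^{\,n-1}=(-1)^n\mu$, $\Omega^2=2\mu$, $\widetilde\Omega^{2n-2}=2(-1)^n\mu$, together with the parity-dependent $\kappa^2=\tfrac{1+(-1)^{n-1}}{2}\mu$, are then nothing but the self-intersection numbers $a^2,ab,b^2$ of linear subspaces on an even quadric, with $\mu$ normalised by Poincaré duality. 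I regard the sign and parity bookkeeping in these last relations, rather than any conceptual point, as the only real obstacle; each is a finite check once the additive structure, the basis $\{\widetilde\Omega^{\,n-1},\kappa\}$, and the ruling intersection numbers are in hand.
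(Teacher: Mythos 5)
The paper itself contains no proof of this statement: it is imported verbatim as Theorem~2 of Lai \cite{lai}, and the only spectral-sequence proof in the paper is for the odd case (Theorem~\ref{G2n+1,2}). So there is nothing in-paper to compare against; what you have written is essentially a reconstruction of Lai's original argument, since Lai's paper is precisely about the even quadric $Q_{2n-2}\cong\widetilde G_{2n,2}$ and its rulings. Your additive computation is correct: $H^*(V_{2n,2};\mathbb Z)$ is as you say (unit tangent bundle of an odd sphere, vanishing Euler class), and the Gysin sequence of $S^1\to V_{2n,2}\to\widetilde G_{2n,2}$ does give $H^{2j}=\mathbb Z\langle\widetilde\Omega^j\rangle$ below the middle, the split extension $0\to\mathbb Z\to H^{2n-2}\to\mathbb Z\to0$, and vanishing odd groups; this parallels the technique the paper uses for $\widetilde G_{2n+1,2}$. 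Two points need repair, though. First, the deck-involution argument only shows $\Omega$ is anti-invariant, i.e.\ lies in the rank-one sublattice spanned by the primitive class $a-b$; it does not give $\Omega=\pm(a-b)$. The cleaner route is the Whitney relation $\widetilde\Omega\,\Omega=e(\widetilde\gamma\oplus\widetilde\gamma^\perp)=e(\varepsilon^{2n})=0$, which places $\Omega$ in $\ker(\cup\,\widetilde\Omega)=\mathbb Z\langle a-b\rangle$, after which integrality of $\kappa=\tfrac12(\Omega+\widetilde\Omega^{n-1})$ as part of a basis forces $\Omega=\pm(a-b)$. Note also that, granting $\Omega+\widetilde\Omega^{n-1}=2\kappa$, this same Whitney relation yields your ``crux'' relation immediately: $0=\Omega\widetilde\Omega=2\kappa\widetilde\Omega-\widetilde\Omega^{n}$, so the ruling computation $h(a+b)=2ha$ is not actually needed for that step, only for the intersection numbers.

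Second, and more seriously for your deferred ``finite sign check'': the relations as printed in the statement cannot all be verified, because they are mutually inconsistent. Multiplying $\Omega+\widetilde\Omega^{n-1}=2\kappa$ by $\kappa$ (cup products in even degrees commute) gives $\kappa\Omega+\kappa\widetilde\Omega^{n-1}=2\kappa^2$, and substituting the printed values yields
\[\bigl(1+(-1)^{n}\bigr)\mu=\bigl(1+(-1)^{n-1}\bigr)\mu,\]
which is false since $\mu$ has infinite order. Your quadric computation will in fact produce a coherent variant: with $m=n-1$, the intersection numbers $a^2=b^2=\tfrac{1+(-1)^{n-1}}{2}\mu_{\mathbb C}$, $ab=\tfrac{1+(-1)^{n}}{2}\mu_{\mathbb C}$, $h^{n-1}=a+b$ force $\kappa\widetilde\Omega^{n-1}=\tfrac12\widetilde\Omega^{2n-2}$ and $\kappa\Omega=\tfrac12\Omega^2$, so whichever orientation one fixes for $\mu$, two of the printed signs (e.g.\ those in $\kappa\widetilde\Omega^{n-1}$ and $\widetilde\Omega^{2n-2}$, relative to those in $\kappa\Omega$, $\kappa^2$, $\Omega^2$) must flip, $(-1)^n$ becoming $(-1)^{n-1}$. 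So your plan is sound, but carry it out expecting to land on a sign-corrected form of the quoted relations; the discrepancy is a transcription defect in the statement (plausibly from translating Lai's dimension convention), not a flaw in your method.
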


In the case of odd dimension, we have the following result. This was proved in \cite{vanzura}, but here we present a different proof, which relies on the Leray-Serre spectral sequence.

\begin{theorem} \label{G2n+1,2}
Let $n\geqslant 2$. The integral cohomology algebra of $\widetilde G_{2n+1,2}$ is
\[H^*(\widetilde G_{2n+1,2};\mathbb Z)=\frac{\mathbb Z[x_2,x_{2n}]}{\langle x_2^n-2x_{2n}, x_{2n}^2\rangle},\]
where $x_2\in H^2(\widetilde G_{2n+1,2};\mathbb Z)$ and  $x_{2n}\in H^{2n}(\widetilde G_{2n+1,2};\mathbb Z)$. If $\rho:\mathbb Z\rightarrow \mathbb Z_2$ is the modulo 2 map, then $\rho_*(x_2)=\widetilde w_2$ and $\rho_*(x_{2n})=a_{2n}$, where $\widetilde w_2\in H^2(\widetilde G_{2n,2};\mathbb Z_2)$ and $a_{2n}\in H^{2n}(\widetilde G_{2n,2};\mathbb Z_2)$ are generators from Theorem 2.1 in \cite{korbas}.
\end{theorem}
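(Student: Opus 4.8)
The plan is to run the Gysin form of the Leray--Serre spectral sequence twice, feeding the cohomology of a Stiefel manifold into that of the Grassmannian. The starting observation is that the unit sphere bundle of the oriented tautological bundle $\widetilde\gamma$ over $\widetilde G_{2n+1,2}$ is exactly $V_{2n+1,2}$, and since $\widetilde\gamma$ has rank $2$ this is a circle bundle $S^1\to V_{2n+1,2}\to\widetilde G_{2n+1,2}$ whose Euler class is precisely $x_2:=e(\widetilde\gamma)$. On the other hand $V_{2n+1,2}$ is the unit tangent bundle of $S^{2n}$, giving a second bundle $S^{2n-1}\to V_{2n+1,2}\to S^{2n}$ whose Euler class is $\chi(S^{2n})=2$ times a generator of $H^{2n}(S^{2n};\mathbb Z)$. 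First I would compute $H^*(V_{2n+1,2};\mathbb Z)$ from the Gysin sequence of the second bundle: multiplication by the Euler class is multiplication by $2$ from $H^0(S^{2n})$ to $H^{2n}(S^{2n})$, so the only contributions are $\mathbb Z$ in degree $0$, $\mathbb Z_2=\operatorname{coker}(\times 2)$ in degree $2n$, and $\mathbb Z$ in the top degree $4n-1$, everything else vanishing.

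Next I would feed this into the Gysin sequence of the circle bundle. Writing the sequence in the form that controls multiplication by $x_2$, one reads off that $\cup x_2\colon H^m(\widetilde G_{2n+1,2})\to H^{m+2}(\widetilde G_{2n+1,2})$ is injective whenever $H^{m+1}(V_{2n+1,2})=0$ and surjective whenever $H^{m+2}(V_{2n+1,2})=0$. Since $H^*(V_{2n+1,2};\mathbb Z)$ is concentrated in degrees $0,2n,4n-1$, this makes $\cup x_2$ an isomorphism in every degree except near $2n$ and near the top (the top exception being vacuous because the relevant target degrees exceed $\dim\widetilde G_{2n+1,2}=4n-2$). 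Starting from $H^0=\mathbb Z$ and propagating the isomorphisms, this forces $H^{2j}(\widetilde G_{2n+1,2})=\mathbb Z\langle x_2^j\rangle$ for $0\le j\le n-1$ together with vanishing in all odd degrees.

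The crux is the single place where the $\mathbb Z_2$ intervenes: the sequence identifies $\operatorname{coker}(\cup x_2\colon H^{2n-2}\to H^{2n})$ with $\mathbb Z_2$, while injectivity of that same map persists. Thus $H^{2n}$ sits in a short exact sequence $0\to\mathbb Z\to H^{2n}\to\mathbb Z_2\to 0$. To pin it down I would argue that multiplication by $x_2$ is again an isomorphism all the way from degree $2n$ up to $4n-2$, so any torsion in $H^{2n}$ would survive to the top group $H^{4n-2}$, contradicting orientability (the top cohomology of a closed connected oriented manifold is $\mathbb Z$). Hence $H^{2n}=\mathbb Z$ and $\cup x_2\colon\mathbb Z\to\mathbb Z$ is multiplication by $2$; choosing a generator $x_{2n}$ with the appropriate sign yields $x_2^n=2x_{2n}$ and $H^{2n+2j}=\mathbb Z\langle x_2^j x_{2n}\rangle$ for $0\le j\le n-1$. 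The presentation then follows formally: the evident map from $\mathbb Z[x_2,x_{2n}]/\langle x_2^n-2x_{2n},\,x_{2n}^2\rangle$ is a surjection that is a surjection of $\mathbb Z$ onto $\mathbb Z$ in each (even) degree, hence a degreewise and therefore ring isomorphism, and $x_{2n}^2=0$ holds for dimension reasons. Finally $\rho_*(x_2)=w_2(\widetilde\gamma)=\widetilde w_2$ because the mod $2$ reduction of the Euler class of an oriented $2$-plane bundle is its top Stiefel--Whitney class, and $\rho_*(x_{2n})=a_{2n}$ because, by the universal coefficient theorem, reduction is an isomorphism $\mathbb Z_2\to\mathbb Z_2$ in degree $2n$.

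I expect the main obstacle to be the honest integral computation of $H^*(V_{2n+1,2};\mathbb Z)$ and, above all, the precise extraction of the relation $x_2^n=2x_{2n}$: everything hinges on recognizing the $\mathbb Z_2$ summand in degree $2n$ of the Stiefel manifold as the obstruction to $\cup x_2$ being an isomorphism, and on using orientability to exclude the split extension $\mathbb Z\oplus\mathbb Z_2$ so that the multiplication is exactly by $2$.
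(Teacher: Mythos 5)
Your proposal is correct and is essentially the paper's proof in different packaging: the Gysin sequence of the circle bundle $S^1\to V_{2n+1,2}\to\widetilde G_{2n+1,2}$ is exactly the two-row Leray--Serre spectral sequence the paper runs (with $d_2$ equal to cup product with the Euler class $x_2=e(\widetilde\gamma)$), and your resolution of the extension $0\to\mathbb Z\to H^{2n}\to\mathbb Z_2\to 0$ by propagating $\cup\,x_2$-isomorphisms up to the top group $H^{4n-2}\cong\mathbb Z$ is the same right-to-left argument, via orientability, by which the paper forces the one non-isomorphism differential to be multiplication by $2$. The only cosmetic differences are that you derive $H^*(V_{2n+1,2};\mathbb Z)$ yourself from the unit tangent bundle of $S^{2n}$ where the paper cites it, and in degree $2$ you identify $\rho_*(x_2)=w_2(\widetilde\gamma)$ directly rather than through the paper's universal-coefficient naturality diagram.
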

\begin{proof}
In order to determine the cohomology algebra $H^*(\widetilde G_{2n+1,2};\mathbb Z)$, we will use the Leray-Serre spectral sequence for the sphere bundle $S^1\rightarrow V_{2n+1,2}\xrightarrow{p} \widetilde G_{2n+1,2}$ where $p$ maps the ordered pair of orthogonal vectors to the oriented plane they span. The integral cohomology ring of Stiefel manifold $V_{2n+1,2}$ is well known (see \cite{mccleary} p. 153 or Theorem 2.3 in \cite{mimura}). We have
\[H^k(V_{2n+1,2};\mathbb Z)\cong 
\begin{cases}
\mathbb Z,& \text{for } k=0,4n-1\\
\mathbb Z_2,&\text{for } k=2n\\
0,& \text{otherwise}
\end{cases}\]
and the multiplication is trivial.

The only nontrivial differential in the spectral sequence is $d_2$. We will analyze the $E_2$ page from left and right and finish in the middle. Since $H^1(V_{2n+1,2};\mathbb Z)=0$, we have $E_2^{1,0}=0$ and also $E_2^{1,1}=0$. In similar fashion, we get that $E_2^{3,0}=E_2^{3,1}=0$ as $H^3(V_{2n+1,2};\mathbb Z)=0$. We can continue with same conclusions up to dimension $2n-1$, i.e., $E_2^{k,0}=E_2^{k,1}=0$ for $k$ odd between $1$ and $2n-1$. On the other hand, since $E_2^{0,1}\cong\mathbb Z$ and $H^k(V_{2n+1,2};\mathbb Z)=0$ for all even $k$ between $2$ and $2n-2$, we get that each $d_2:E_2^{k-2,1}\rightarrow E_2^{k,0}$ must be an isomorphism for $2\leqslant k\leqslant 2n-2$, i.e., $E_2^{k,0}\cong E_2^{k,1}\cong\mathbb Z$ for $k$ even between $0$ and $2n-2$.

Now starting from the right-hand side we first have $E_2^{4n-2,0} \cong \mathbb Z$ since $H^{4n-2}(\widetilde G_{2n+1,2};\mathbb Z)\cong \mathbb Z$ ($\widetilde G_{2n+1,2}$ is an orientable closed connected manifold of dimension $4n-2$) and then we have a series of isomorphisms
 \[E_2^{2n,0}\cong E_2^{2n,1}\xrightarrow{d_2} E_2^{2n+2,0} \cong E_2^{2n+2,1} \xrightarrow{d_2} \cdots \xrightarrow{d_2} E_2^{4n-2,0} \cong  \mathbb Z,\]
 so we get $E_2^{k,0}=E_2^{k,1}=\mathbb Z$ for $k$ even between $2n$ and $4n-2$. Similarly, $E_2^{4n-3,1} = 0$ since $H^{4n-2}(V_{2n+1,2};\mathbb Z) = 0$ and we get $E_2^{k,0}=E_2^{k,1}=0$ for $k$ odd between $2n+1$ and $4n-3$. The only nontrivial differential which is not an isomorphism is $d_2:E_2^{2n-2,1}\rightarrow E_2^{2n,0}$ and since $H^{2n}(V_{2n+1,2};\mathbb Z)=\mathbb Z_2$, this differential must be multiplication by $2$. The $E_2$ page of the spectral sequence is presented in Table \ref{tab:SVG2n+1}.

\begin{table}[h]
\begin{center}
\scalebox{0.96}{
\begingroup
\renewcommand{\arraystretch}{1.5} %
\begin{tabular}{c|c|c|c|c|c|c|c|c|c|c|c}
1 & $\mathbb Z$\tikzmark{a} & 0 & $\mathbb Z$ & $\cdots$ & $\mathbb Z$\tikzmark{c} & 0 & $\mathbb Z$ \tikzmark{e} & 0 & $\mathbb Z$ & $\cdots$ & $\mathbb Z$ \\\cline{1-12}
0 & $\mathbb Z$ & 0 & \tikzmark{b} $\mathbb Z$ & $\cdots$ & $\mathbb Z$ & 0 & \tikzmark{d}$\mathbb Z$ & 0 & \tikzmark{f}$\mathbb Z$ & $\cdots$ & $\mathbb Z$ \\
\cline{1-12}
 & 0 & 1 & 2 & $\cdots$ & $2n-2$ & $2n-1$ & $2n$ & $2n+1$ & $2n+2$ & $\cdots$ & $4n-2$ \\
  \end{tabular}

  \begin{tikzpicture}[overlay, remember picture, shorten >=.5pt, shorten <=.5pt, transform canvas={yshift=.25\baselineskip}]
    \draw [>->>] ([yshift=.75pt]{pic cs:a}) -- ({pic cs:b});
    \draw [->] ([yshift=.75pt]{pic cs:c}) -- ({pic cs:d}) node  [pos=0.35,above] {$~_{\cdot 2}$};
    \draw [>->>] ([yshift=.75pt]{pic cs:e}) -- ({pic cs:f});
  \end{tikzpicture}
\endgroup
}
\end{center}
\caption{\label{tab:SVG2n+1}The $E_2$ page for the sphere bundle $S^1\rightarrow V_{2n+1,2}\rightarrow \widetilde G_{2n+1,2}$}
\end{table}

So far we have determined cohomology groups of $\widetilde G_{2n+1,2}$ to be $\mathbb Z$ in even and $0$ in odd dimensions. Differential $d_2:E_2^{k,1}\rightarrow E_2^{k+2,0}$ is an isomorphism for $k=0$. If $s\in H^1(S^1;\mathbb Z)$ is the generator, let $x_2\in H^2(\widetilde G_{2n+1,2};\mathbb Z)$ be the generator such that $d_2(1\otimes s)=x_2\otimes 1$.
 From this relation we can determine all the other differentials $d_2$. If $k$ is odd, $d_2:E_2^{k,1}\rightarrow E_2^{k+2,0}$ is trivial and if $k$ is even and $x_k\in H^k(\widetilde G_{2n+1,2};\mathbb Z)$ then
\begin{center}
\scalebox{0.87}
{$d_2(x_k\otimes s)=d_2(x_k\otimes 1)(1\otimes s)+(-1)^k(x_k\otimes 1)d_2(1\otimes s)=(x_k\otimes 1)(x_2\otimes 1)=x_kx_2\otimes 1.$}
\end{center}

Therefore our algebra has two generators, $x_2$ and $x_{2n}\in H^{2n}(\widetilde G_{2n+1,2};\mathbb Z)$ and two relations $x_2^n=2x_{2n}$ and $x_{2n}^2=0$.

What remains is to prove that modulo two reduction of generators $x_2$ and $x_{2n}$ gives us generators of $H^*(\widetilde G_{2n+1,2};\mathbb Z_2)$. From Theorem 2.1 in \cite{korbas} we have
\[H^*(\widetilde G_{2n+1,2};\mathbb Z_2)=\mathbb Z_2[\widetilde w_2]/\langle\widetilde w_2^{n}\rangle\otimes \Lambda_{\mathbb Z_2}(a_{2n}).\]
The universal coefficient theorem gives us the following commutative diagram.
\[
\begin{tikzcd}[column sep = tiny, font = \small]
0
\arrow{r}
&
\mathrm{Ext}(H_{k-1}(\widetilde G_{2n+1,2}),\mathbb Z)
\arrow{d}{\rho_*}
\arrow{r}
&
H^k(\widetilde G_{2n+1,2};\mathbb Z)
\arrow{d}{\rho_*}
\arrow{r}
&
\mathrm{Hom}(H_k(\widetilde G_{2n+1,2}),\mathbb Z)
\arrow{d}{\rho_*}
\arrow{r}
&
0
\\
0
\arrow{r}
&
\mathrm{Ext}(H_{k-1}(\widetilde G_{2n+1,2}),\mathbb Z_2)
\arrow{r}
&
H^k(\widetilde G_{2n+1,2};\mathbb Z_2)
\arrow{r}
&
\mathrm{Hom}(H_k(\widetilde G_{2n+1,2}),\mathbb Z_2)
\arrow{r}
&
0
\end{tikzcd}
\]

For $k=2$, $H_{k-1}(\widetilde G_{2n+1,2})=0$, so previous diagram becomes

\[
\begin{tikzcd}[column sep = small]
0
\arrow{r}
&
H^2(\widetilde G_{2n+1,2};\mathbb Z)
\arrow{d}{\rho_*}
\arrow[two heads, tail]{r}
&
\mathrm{Hom}(H_2(\widetilde G_{2n+1,2}),\mathbb Z)
\arrow{d}{\rho_*}
\arrow{r}
&
0
\\
0
\arrow{r}
&
H^2(\widetilde G_{2n+1,2};\mathbb Z_2)
\arrow[two heads, tail]{r}
&
\mathrm{Hom}(H_2(\widetilde G_{2n+1,2}),\mathbb Z_2)
\arrow{r}
&
0
\end{tikzcd}
\]

Poincar\' e duality gives us that $H_2(\widetilde G_{2n+1,2})\cong\mathbb Z$, so the right homomorphism $\rho_*$ is an epimorphism and since horizontal homomorphisms are isomorphisms, $\rho_*:H^2(\widetilde G_{2n+1,2};\mathbb Z)\rightarrow H^2(\widetilde G_{2n+1,2};\mathbb Z_2)$ must also be an epimorphism, so $\rho_*(x_2)=\widetilde w_2$.
\\
\indent The situation is same for the case $k=2n$. We have $H_{2n-1}(\widetilde G_{2n+1,2})=0$, so we have smaller diagram as the previous one from which we conclude that $\rho_*:H^{2n}(\widetilde G_{2n+1,2};\mathbb Z)\rightarrow H^{2n}(\widetilde G_{2n+1,2};\mathbb Z_2)$ is an epimorphism as well, so $\rho_*(x_{2n})=a_{2n}$.
\end{proof}

\begin{remark} In the previous proof we have seen that differential $d_2$ in Table \ref{tab:SVG2n+1} is multiplication with $x_2$, so this must be the Euler class of the fiber bundle $S^1 \rightarrow V_{2n+1,2} \rightarrow \widetilde G_{2n+1,2}$. But this bundle is easily identified with the sphere bundle associated to the tautological bundle $\widetilde\gamma$, so $x_2=\widetilde \Omega=e(\widetilde\gamma)$, as in Theorem \ref{lai2n}. On the other hand, $\Omega=e(\widetilde\gamma^\perp)=0$, since $\Omega\in H^{2n-1}(\widetilde G_{2n+1,2},\mathbb Z)=0$.
\end{remark}

\section{Cohomology algebra  $H^*(W^{2n}_{2,1};\mathbb Z)$}
\label{W2,1}

The space $W^n_{2,1}$ is defined in \cite{basu} as
\[W_{2,1}^n=\left\{ (\widetilde P,v)\in \widetilde G_{n,2}\times S^{n-1} \mid \text{$\widetilde P$ and $v$ are orthogonal}\right\}.\]

For $n$ even, we will determine the integral cohomology algebra of this space using the Leray-Serre spectral sequence for the sphere bundle $S^{n-3}\rightarrow W^n_{2,1}\xrightarrow{p_1} \widetilde G_{n,2}$ where $p_1$ is the projection onto the first coordinate. Note that this is actually the sphere bundle associated to the orthogonal complement $\widetilde \gamma^\perp$  of the oriented tautological bundle over $\widetilde G_{n,2}$.

\begin{theorem}\label{W2n2,1}
Let $n\geqslant 3$. The integral cohomology algebra of $W^{2n}_{2,1}$ is
\[H^*(W^{2n}_{2,1};\mathbb Z)=\frac{\mathbb Z[\bar x_2,\bar x_{2n-2},\bar x_{2n-1}]}{\langle \bar x_2^{n-1}-2\bar x_{2n-2},\bar x_{2n-2}^2, \bar x_{2n-1}^2\rangle},\]
where $\bar x_2\in H^2(W^{2n}_{2,1};\mathbb Z)$, $\bar x_{2n-2}\in H^{2n-2}(W^{2n}_{2,1};\mathbb Z)$ and  $\bar x_{2n-1}\in H^{2n-1}(W^{2n}_{2,1};\mathbb Z)$.
\end{theorem}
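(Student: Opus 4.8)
The plan is to run the Leray--Serre spectral sequence of the sphere bundle $S^{2n-3}\to W^{2n}_{2,1}\xrightarrow{p_1}\widetilde G_{2n,2}$, which, as noted above, is the unit sphere bundle of the oriented rank-$(2n-2)$ bundle $\widetilde\gamma^\perp$. The fibre has cohomology concentrated in degrees $0$ and $2n-3$, so the $E_2$ page consists of two copies of $H^*(\widetilde G_{2n,2};\mathbb Z)$ (known from Theorem \ref{lai2n}) placed in the rows $q=0$ and $q=2n-3$. For degree reasons every differential before $d_{2n-2}$ must vanish (a $d_r$ with $2\leqslant r\leqslant 2n-3$ lands in a zero row), and $d_{2n-2}\colon E_{2n-2}^{p,2n-3}\to E_{2n-2}^{p+2n-2,0}$ is the transgression, which for a sphere bundle equals cup product with the Euler class $\Omega=e(\widetilde\gamma^\perp)\in H^{2n-2}(\widetilde G_{2n,2};\mathbb Z)$.

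The heart of the additive computation is therefore to understand multiplication by $\Omega$ on $H^*(\widetilde G_{2n,2};\mathbb Z)$. Writing $t=\widetilde\Omega$, Theorem \ref{lai2n} gives $\Omega=2\kappa-t^{n-1}$ and, from $t^n=2\kappa t$, the identities $t^{n-1+j}=2\kappa t^{j}$ for $j\geqslant1$. Hence $\Omega t^{j}=2\kappa t^{j}-t^{n-1+j}=0$ for every $j\geqslant1$, while $\Omega\cdot1=\Omega$ is a primitive element of $H^{2n-2}$ and $\Omega\kappa=\mu$ generates the top group. Reading off the kernels and cokernels of $\cup\Omega$ in each degree then yields $E_\infty$: the row $q=0$ survives to $\mathbb Z$ in every even degree $0,2,\dots,4n-6$ and dies in degree $4n-4$, and the row $q=2n-3$ survives to $\mathbb Z$ in every degree $2n-1,2n+1,\dots,6n-7$. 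Since row $0$ contributes only to even total degrees and row $2n-3$ only to odd ones, no two nonzero entries ever share a total degree, so there are no extension problems and the additive structure of $H^*(W^{2n}_{2,1};\mathbb Z)$ follows directly.

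To obtain the ring structure I would set $\bar x_2=p_1^*t$ and $\bar x_{2n-2}=p_1^*\kappa$, and let $\bar x_{2n-1}\in H^{2n-1}(W^{2n}_{2,1};\mathbb Z)$ be a class representing the $E_\infty$ generator $t\otimes u$ of the fibre row (here $u$ is the fibre fundamental class). Two relations are immediate from dimension, since $\bar x_{2n-2}^2\in H^{4n-4}(W^{2n}_{2,1};\mathbb Z)=0$ and $\bar x_{2n-1}^2\in H^{4n-2}(W^{2n}_{2,1};\mathbb Z)=0$. For the remaining relation, the edge (equivalently Gysin) homomorphism gives $p_1^*\Omega=0$, whence $\bar x_2^{n-1}=p_1^*(t^{n-1})=p_1^*(2\kappa-\Omega)=2\bar x_{2n-2}$. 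That these classes generate follows from the $H^*(\widetilde G_{2n,2})$-module structure of the fibre row: the generators of the surviving $E_\infty^{p,2n-3}$ are exactly the $t^{j}\otimes u$ and $\kappa t^{j}\otimes u$ with $j\geqslant1$, i.e.\ the associated graded classes of $\bar x_2^{\,a}\bar x_{2n-1}$ and $\bar x_2^{\,a}\bar x_{2n-2}\bar x_{2n-1}$, while the even part is generated by $\bar x_2$ and $\bar x_{2n-2}$. Because each total degree carries a single nonzero $E_\infty$ entry, these $E_\infty$-level identifications lift without ambiguity to genuine products in $H^*(W^{2n}_{2,1};\mathbb Z)$. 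This yields a surjective graded ring homomorphism from $\mathbb Z[\bar x_2,\bar x_{2n-2},\bar x_{2n-1}]/\langle\bar x_2^{n-1}-2\bar x_{2n-2},\bar x_{2n-2}^2,\bar x_{2n-1}^2\rangle$ onto $H^*(W^{2n}_{2,1};\mathbb Z)$, and a degreewise count shows the presented ring is $\mathbb Z$ in precisely the degrees found above, so the surjection is an isomorphism.

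The step I expect to be most delicate is the passage from $E_\infty$ to the ring structure. The additive part rests on the clean vanishing $\Omega t^{j}=0$ for $j\geqslant1$, which must be extracted carefully from Lai's relations; once that and the parity separation of the two rows are in hand, the absence of extension and multiplicative-filtration ambiguities makes the presentation fall out. The genuine care is needed in verifying that $\bar x_2,\bar x_{2n-2},\bar x_{2n-1}$ really generate and satisfy exactly the stated relations---in particular pinning down $\bar x_2^{n-1}=2\bar x_{2n-2}$ via $p_1^*\Omega=0$---rather than merely matching Betti numbers.
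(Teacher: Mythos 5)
Your proposal is correct and follows essentially the same route as the paper: the Leray--Serre spectral sequence of $S^{2n-3}\rightarrow W^{2n}_{2,1}\rightarrow \widetilde G_{2n,2}$, with $d_{2n-2}$ identified as cup product with $\Omega=2\kappa-\widetilde\Omega^{\,n-1}$, the vanishing $\Omega\widetilde\Omega^{\,j}=0$ for $j\geqslant 1$, and the same three generators $\widetilde\Omega\otimes 1$, $\kappa\otimes 1$, $\widetilde\Omega\otimes s$. Your treatment of the passage from $E_\infty$ to the ring is in fact slightly more careful than the paper's appeal to the total complex, since you pin down $\bar x_2^{\,n-1}=2\bar x_{2n-2}$ on the nose via $p_1^*\Omega=0$ and note that the parity separation of the two rows rules out extension and filtration ambiguities.
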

\begin{proof} Theorem \ref{lai2n} gives us the $E_2$ page of the Leray-Serre spectral sequence for the sphere bundle $S^{2n-3}\rightarrow W_{2,1}^{2n}\rightarrow \widetilde G_{2n,2}$. In each cell of Table \ref{tab:SWG2n} are generators of this $E_2$ page, where $\kappa$ and $\widetilde \Omega$ are those generators from Theorem \ref{lai2n}. For instance, $\widetilde \Omega^2$ in cell $(4,0)$ means $E_2^{4,0}\cong\mathbb Z\langle\widetilde \Omega^2\rangle$ and $\widetilde \Omega^{n-1},~\kappa$ in cell $(2n-2,0)$ means $E_2^{2n-2,0}\cong\mathbb Z\langle\widetilde \Omega^{n-1}\rangle\oplus\mathbb Z\langle\kappa\rangle$. 

\begin{center}
\begin{table}[h]
\scalebox{0.8}{
\renewcommand{\arraystretch}{1.5} %
\begin{tabular}{c|c|c|c|c|c|c|c|c|c|c|c|c|c|c|c}
$2n-3$ & $1$ &  & $\widetilde\Omega$ & & $\widetilde\Omega^2$ & $\cdots$ & $\widetilde\Omega^{n-2}$ & & $\widetilde\Omega^{n-1},~\kappa$ & & $\kappa\widetilde\Omega$ & $\cdots$ & $\kappa\widetilde\Omega^{n-2}$ & & $\kappa \widetilde\Omega^{n-1}$ \\\cline{1-16}
$\vdots$ &&&&&&&&&&&&&&&\\\cline{1-16}
$0$ & $1$ &  & $\widetilde\Omega$ & & $\widetilde\Omega^2$ & $\cdots$ & $\widetilde\Omega^{n-2}$ & & $\widetilde\Omega^{n-1},~\kappa$ & & $\kappa\widetilde\Omega$ & $\cdots$ & $\kappa\widetilde\Omega^{n-2}$ & & $\kappa \widetilde\Omega^{n-1}$ \\\cline{1-16}
 & 0 &  & 2 &  & 4 & $\cdots$ & $2n-4$ & & $2n-2$ & & $2n$ &  $\cdots$ & $4n-6$ & & $4n-4$ \\
  \end{tabular}
}
\caption{\label{tab:SWG2n}The $E_2$ page for the sphere bundle $S^{2n-3}\rightarrow W_{2,1}^{2n}\rightarrow\widetilde G_{2n,2}$}
\end{table}
\end{center}

The only nontrivial differential is $d_{2n-2}$ and we know that it is multiplication by $\Omega$, since $\Omega$ is the Euler class of this sphere bundle. By multiplication we mean the following. Generators of $E_{2n-2}^{k,2n-3}=E_2^{k,2n-3}$ are  tensor products of elements in $H^k(\widetilde G_{2n,2};\mathbb Z)$ and a generator $s\in H^{2n-3}(S^{2n-3};\mathbb Z)$. For instance, by $\widetilde \Omega\in E_2^{2,2n-3}$ we mean $\widetilde \Omega \otimes s$. Similarly, $\widetilde \Omega\in E_2^{2,0}$ is actually $\widetilde \Omega\otimes 1$. Claim that $d_{2n-2}$ is multiplication by $\Omega$ only means that for $x\in H^k(\widetilde G_{2n,2};\mathbb Z)$ we have $d_{2n-2}(x\otimes s)=x\Omega\otimes 1$. Since $d_{2n-2}$ is the only nontrivial differential, we have $E_2=E_3=\cdots=E_{2n-2}$ and $E_{2n-1}=E_{2n}=\cdots=E_\infty$. Since $\widetilde \Omega \Omega=0$, we get $d_{2n-2}:E_{2n-2}^{k,2n-3}\rightarrow E_{2n-2}^{k+2n-2,0}$ is zero except at most for $k=0$ and $k=2n-2$. We know from Theorem \ref{lai2n} that $\Omega=2\kappa-\widetilde\Omega^{n-1}$, so $\mathrm{im}\{d_{2n-2}:E_{2n-2}^{0,2n-3}\rightarrow E_{2n-2}^{2n-2,0}\}=\mathbb Z\langle 2\kappa-\widetilde\Omega^{n-1}  \rangle$ and so we get
\[E_\infty^{2n-2,0}=E_{2n-2}^{2n-2,0}\big/\mathrm{im }~d_{2n-2}=\frac{\mathbb Z\langle\kappa\rangle\oplus\mathbb Z\langle\widetilde\Omega^{n-1}\rangle}{\mathbb Z\langle 2\kappa-\widetilde\Omega^{n-1} \rangle}\cong\mathbb Z\langle\kappa\rangle\]
and $E_\infty^{0,2n-3}=0$. Also from the same Theorem \ref{lai2n} we get that $d_{2n-2}:E_{2n-2}^{2n-2,2n-3}\rightarrow E_{2n-2}^{4n-4,0}$ is given on generators by
\[d_{2n-2}\widetilde \Omega^{n-1}=0,~d_{2n-2}\kappa=(-1)^n\kappa\widetilde \Omega^{n-1},\]
from which we get $E_\infty^{2n-2,2n-3}\cong \mathbb Z\langle\widetilde\Omega^{n-1}\rangle$ and $E_\infty^{4n-4,0}=0$.

Apart from these four, every other cell in $E_\infty$ page is the same as in $E_2$ page.

Now we can determine the cohomology algebra $H^*(W_{2,1}^{2n};\mathbb Z)$. We have an algebra isomorphism
\[H^*(W_{2,1}^{2n};\mathbb Z)\cong \mathrm{total}(E_\infty^{*,*}),\]
where $\mathrm{total}(E_\infty^{*,*})$ is the total complex (for definition see \cite{mccleary} p. 24).
 From $E_\infty$ we can conclude that $H^*(W_{2,1}^{2n};\mathbb Z)$ will have three generators $\bar x_2,~\bar x_{2n-2}$ and $\bar x_{2n-1}$ corresponding to $\widetilde \Omega\otimes 1$, $\kappa\otimes 1$ and $\widetilde \Omega \otimes s$ respectively and from relations in $H^*(\widetilde G_{2n,2};\mathbb Z)$ we get relations in $H^*(W_{2,1}^{2n};\mathbb Z)$. For instance, generator of $H^{4n-3}(W_{2,1}^{2n};\mathbb Z)$ is in correspondence with $\kappa\widetilde\Omega\otimes s=(\kappa\otimes 1)(\widetilde\Omega\otimes s)$ so it must be $\bar x_{2n-1}\bar x_{2n-2}$ and so on. After simple calculations we get the specified algebra structure.
\end{proof}

\section{Some general properties of $H^*(\widetilde G_{n,3};\mathbb Z)$}
\label{propertiesGn3}

\begin{lemma} \label{H01234} Let $n\geqslant 7$. For the first four and the last four integral cohomology groups of $\widetilde G_{n,3}$ the following relations hold:
\[\begin{matrix}
\begin{split}
&H^0(\widetilde G_{n,3};\mathbb Z)\cong\mathbb Z, \\
&H^1(\widetilde G_{n,3};\mathbb Z)=0, \\
&H^2(\widetilde G_{n,3};\mathbb Z)=0, \\
&H^3(\widetilde G_{n,3};\mathbb Z)\cong\mathbb Z_2,
\end{split}
&
\begin{split}
& H^{3(n-3)-3}(\widetilde G_{n,3};\mathbb Z)=0,\\
& H^{3(n-3)-2}(\widetilde G_{n,3};\mathbb Z)\cong\mathbb Z_2,\\
& H^{3(n-3)-1}(\widetilde G_{n,3};\mathbb Z)=0,\\
& H^{3(n-3)}(\widetilde G_{n,3};\mathbb Z)\cong\mathbb Z.
\end{split}
\end{matrix}\]
Also, the group $H^4(\widetilde G_{n,3};\mathbb Z)$ is free.
\end{lemma}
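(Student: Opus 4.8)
The plan is to reduce the entire statement to the computation of the low-dimensional integral homology groups $H_1,H_2,H_3$ of $\widetilde G_{n,3}$ and then to assemble the cohomology via the universal coefficient theorem and Poincar\'e duality. First I would record that $\widetilde G_{n,3}=SO(n)/(SO(3)\times SO(n-3))$ is connected and simply connected: in the homotopy exact sequence of the bundle $SO(3)\times SO(n-3)\to SO(n)\to\widetilde G_{n,3}$ the map $\pi_1(SO(3)\times SO(n-3))=\mathbb Z_2\oplus\mathbb Z_2\to\pi_1(SO(n))=\mathbb Z_2$ is surjective, so $\pi_1(\widetilde G_{n,3})=0$. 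This already gives $H^0\cong\mathbb Z$ and $H^1=0$, and, being simply connected, $\widetilde G_{n,3}$ is orientable, so Poincar\'e duality is available; by Hurewicz $H_1=0$.

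Next I would compute $H_2$ and $H_3$. Continuing the same exact sequence, $\pi_2(SO(n))=0$ and the kernel of $\mathbb Z_2\oplus\mathbb Z_2\to\mathbb Z_2$ is $\mathbb Z_2$, whence $\pi_2(\widetilde G_{n,3})\cong\mathbb Z_2$ and, by Hurewicz, $H_2\cong\mathbb Z_2$. For $H_3$ I would argue its vanishing in three steps. Rationally, the cohomology of $\widetilde G_{n,3}$ is generated by the Pontryagin classes of the tautological bundle (lowest degree $4$) together with the Euler class of its orthogonal complement (degree $n-3\ge 4$), so $H^3(\widetilde G_{n,3};\mathbb Q)=0$ and $H_3$ is finite. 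Modulo $2$, in this range $H^*(\widetilde G_{n,3};\mathbb Z_2)$ is the polynomial algebra $\mathbb Z_2[w_2,w_3]$, so $\dim_{\mathbb Z_2}H^3(\widetilde G_{n,3};\mathbb Z_2)=1$; by the universal coefficient theorem this single class is accounted for by $\mathrm{Tor}(H_2,\mathbb Z_2)=\mathbb Z_2$, forcing $H_3\otimes\mathbb Z_2=0$, i.e. $H_3$ has no $2$-torsion. Finally, the integral cohomology of oriented Grassmannians carries only $2$-torsion, which one sees by running the Serre spectral sequence of $\widetilde G_{n,3}\to BSO(3)\times BSO(n-3)\to BSO(n)$ with $\mathbb Z_p$ coefficients for odd $p$, where all the relevant spaces are free of odd torsion; hence $H_3$ has no odd torsion either and $H_3=0$.

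With $H_1=0$, $H_2\cong\mathbb Z_2$ and $H_3=0$ in hand, the universal coefficient theorem $H^k\cong\mathrm{Hom}(H_k,\mathbb Z)\oplus\mathrm{Ext}(H_{k-1},\mathbb Z)$ yields $H^2=\mathrm{Hom}(\mathbb Z_2,\mathbb Z)=0$, then $H^3=\mathrm{Ext}(\mathbb Z_2,\mathbb Z)=\mathbb Z_2$, and $H^4=\mathrm{Hom}(H_4,\mathbb Z)\oplus\mathrm{Ext}(H_3,\mathbb Z)=\mathrm{Hom}(H_4,\mathbb Z)$, which is free; this settles the four groups on the left together with the freeness of $H^4$. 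For the four groups on the right I would invoke Poincar\'e duality $H^k(\widetilde G_{n,3};\mathbb Z)\cong H_{m-k}(\widetilde G_{n,3};\mathbb Z)$ with $m=3(n-3)$, giving $H^m\cong H_0=\mathbb Z$, $H^{m-1}\cong H_1=0$, $H^{m-2}\cong H_2=\mathbb Z_2$ and $H^{m-3}\cong H_3=0$, exactly as claimed.

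The main obstacle is the vanishing $H_3=0$, and within it the elimination of odd torsion: the rational and mod $2$ computations only show $H_3$ is finite with no $2$-torsion, so the decisive input is that $\widetilde G_{n,3}$ has no odd torsion at all. I expect this to be the step requiring the most care, to be handled either by quoting the classical fact that the homology of real Grassmannians is $2$-primary or by carrying out the odd-primary spectral sequence argument sketched above; everything else is a formal consequence of simple connectivity, Hurewicz, the universal coefficient theorem and Poincar\'e duality.
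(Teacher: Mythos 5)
Your proposal is correct in substance, but it reaches the crucial vanishing $H_3(\widetilde G_{n,3})=0$ by a genuinely different and heavier route than the paper. The paper runs the homotopy exact sequence of the fibration $SO(3)\to V_{n,3}\to\widetilde G_{n,3}$: since the Stiefel manifold $V_{n,3}$ is $(n-4)$-connected and $n\geqslant 7$, one reads off $\pi_1(\widetilde G_{n,3})=0$, $\pi_2(\widetilde G_{n,3})\cong\pi_1(SO(3))\cong\mathbb Z_2$ and, decisively, $\pi_3(\widetilde G_{n,3})\cong\pi_2(SO(3))=0$ all at once; the Hurewicz epimorphism $\pi_3\twoheadrightarrow H_3$ then kills $H_3$ in one line, and the rest (universal coefficients, freeness of $H^4$, Poincar\'e duality for the top four groups) is identical to yours. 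You instead compute $\pi_1,\pi_2$ from $SO(3)\times SO(n-3)\to SO(n)\to\widetilde G_{n,3}$ (fine, and equivalent), but since $\pi_3$ is awkward from that bundle you triangulate $H_3$: finiteness from rational cohomology, absence of $2$-torsion from mod $2$ cohomology plus universal coefficients, and absence of odd torsion from the general principle that oriented Grassmannians are $2$-primary. That principle is true and your sketched argument for it (odd-primary Serre/Eilenberg--Moore comparison along $\widetilde G_{n,3}\to BSO(3)\times BSO(n-3)\to BSO(n)$) is viable, but as written it is the citation-needed pivot of the whole proof, and carrying it out honestly is more work than the paper's entire lemma; the paper's choice of the Stiefel bundle avoids the torsion question altogether. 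Two smaller imprecisions worth fixing if you keep your route: for $n$ even the bundle $\widetilde\gamma^\perp$ has odd rank $n-3$ and hence no Euler class, and the odd-degree rational generator sits in degree $n-1$ (e.g.\ degree $7$ for $\widetilde G_{8,3}$), so your description of the rational generators is off, though the needed conclusion $H^3(\widetilde G_{n,3};\mathbb Q)=0$ survives (e.g.\ via the Poincar\'e polynomials in Greub--Halperin--Vanstone); and the claim that $H^3(\widetilde G_{n,3};\mathbb Z_2)$ is one-dimensional should be anchored to a reference such as Korba\v s--Rusin. Note also that your mod $2$ step uses the finiteness of $H_3$ (so that $\mathrm{Hom}(H_3,\mathbb Z_2)=0$ rules out $2$-torsion), so the rational computation must come first --- your write-up does order things correctly.
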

\begin{proof} Consider the homotopy exact sequence for the fiber bundle $SO(3)\rightarrow V_{n,3}\xrightarrow{p}\widetilde G_{n,3}$ where $p$ is given by the oriented span of the three vectors.
\[\begin{split}
\cdots&\rightarrow \pi_3(V_{n,3})\rightarrow \pi_3(\widetilde G_{n,3})\rightarrow \pi_2(SO(3))\rightarrow \pi_2(V_{n,3})\rightarrow \pi_2(\widetilde G_{n,3})\\
&\rightarrow\pi_1(SO(3))\rightarrow \pi_1(V_{n,3})\rightarrow \pi_1(\widetilde G_{n,3})\rightarrow 0
\end{split}\]

The Stiefel manifold $V_{n,3}$ is $(n-4)$-connected. Since, $n\geqslant 7$ the first three homotopy groups of $V_{n,3}$ are zero. Also, $\pi_1(SO(3))\cong\mathbb Z_2$ and $\pi_2(SO(3))=0$, so our sequence becomes
\[\cdots\rightarrow 0\rightarrow \pi_3(\widetilde G_{n,3})\rightarrow0\rightarrow 0\rightarrow \pi_2(\widetilde G_{n,3})\rightarrow \mathbb Z_2\rightarrow0\rightarrow \pi_1(\widetilde G_{n,3})\rightarrow 0.\]
It is now obvious that
\[\pi_1(\widetilde G_{n,3})=0,~~\pi_2(\widetilde G_{n,3})=\mathbb Z_2,~~\pi_3(\widetilde G_{n,3})=0.\]

Since $\widetilde G_{n,3}$ is $1$-connected, we have that Hurewicz homomorphism $h:\pi_k(\widetilde G_{n,3})\rightarrow H_k(\widetilde G_{n,3})$ is an isomorphism for $k\in\{1,2\}$ and an epimorphism for $k=3$ (see Theorem 10.25 in \cite{switzer}). This gives us that the first, the second and the third homology groups of $\widetilde G_{n,3}$ are $0$, $\mathbb Z_2$ and $0$ respectively.

The universal coefficient theorem claims that
\[H^k(\widetilde G_{n,3};\mathbb Z)\cong \mathrm{Ext}(H_{k-1}(\widetilde G_{n,3}),\mathbb Z)\oplus\mathrm{Hom}(H_k(\widetilde G_{n,3}),\mathbb Z).\]
From this relation we get the first four cohomology groups and also the fact that $H^4(\widetilde G_{n,3};\mathbb Z)$ is free. On the other hand, the Poincar\' e duality
\[H^{3(n-3)-k}(\widetilde G_{n,3};\mathbb Z)\cong H_k(\widetilde G_{n,3})\]
gives us the last four groups.
\end{proof}

\begin{remark}
The case $n=6$ is  covered in \cite{kalafat}, so the previous Lemma holds for all $n\geqslant 6$.
\end{remark}

Let us label the free and the torsion part of $H_k(\widetilde G_{n,3})$ as $F_k$ and $T_k$, i.e., $H_k(\widetilde G_{n,3})\cong F_k\oplus T_k$. The universal coefficient theorem gives us
\[H^k(\widetilde G_{n,3};\mathbb Z)\cong F_k\oplus T_{k-1}.\]

Since $\widetilde G_{n,3}\approx SO(n)/SO(3)\times SO(n-3)$, we can find the Poincar\' e polynomials for $\widetilde G_{n,3}$ in \cite{greub} (p. 494-496), so all groups $F_k$ are known.

Combining this with the Poincar\' e duality, which gives us
\[T_k\cong T_{3(n-3)-k-1},\]
the problem of determining integral cohomology groups of $\widetilde G_{n,3}$ becomes determining torsion parts $T_k$ for $4\leqslant k \leqslant \left[\frac{3(n-3)}{2}\right]$.

\section{Cohomology algebra $H^*(\widetilde G_{8,3};\mathbb Z)$}
\label{G83}

\begin{theorem}\label{ringG83}
The integral cohomology algebra of $\widetilde G_{8,3}$ is
\[H^*(\widetilde G_{8,3};\mathbb Z)\cong\frac{\mathbb Z[y_3,x_4,x_7]}{\langle 2y_3,y_3x_4,y_3^3,x_4^3,x_7^2\rangle},\]
where $y_3\in H^3(\widetilde G_{8,3};\mathbb Z)$, $x_4\in H^4(\widetilde G_{8,3};\mathbb Z)$ and  $x_7\in H^7(\widetilde G_{8,3};\mathbb Z)$.
\end{theorem}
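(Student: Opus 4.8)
The plan is to run the Leray--Serre spectral sequence of the sphere bundle $S^2\to W^8_{2,1}\xrightarrow{p}\widetilde G_{8,3}$, which is exactly the unit sphere bundle of the oriented tautological $3$-plane bundle $\widetilde\gamma$ over $\widetilde G_{8,3}$, so that fibre integration turns the computation into a Gysin sequence with $H^*(W^8_{2,1};\mathbb Z)$ as known input. Putting $2n=8$ in Theorem \ref{W2n2,1} gives $H^*(W^8_{2,1};\mathbb Z)=\mathbb Z[\bar x_2,\bar x_6,\bar x_7]/\langle\bar x_2^3-2\bar x_6,\bar x_6^2,\bar x_7^2\rangle$, which is free abelian of rank $12$, with a single $\mathbb Z$ in each of the degrees $0,2,4,6,7,8,9,10,11,13,15,17$. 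Since $H^q(S^2;\mathbb Z)$ is nonzero only for $q=0,2$, the page $E_2=E_3$ has just two rows, each a copy of $H^*(\widetilde G_{8,3};\mathbb Z)$, and the one nonzero differential $d_3\colon E_3^{p,2}\to E_3^{p+3,0}$ is cup product with the Euler class $e(\widetilde\gamma)\in H^3(\widetilde G_{8,3};\mathbb Z)\cong\mathbb Z_2$ (Lemma \ref{H01234}). The first point is that $e(\widetilde\gamma)$ is the generator $y_3$: were it zero the sequence would collapse and $H^3(W^8_{2,1};\mathbb Z)$ would contain $E_\infty^{3,0}=H^3(\widetilde G_{8,3};\mathbb Z)=\mathbb Z_2$, contradicting $H^3(W^8_{2,1};\mathbb Z)=0$. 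Thus $d_3=\cup\,y_3$ throughout.

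Next I would feed Lemma \ref{H01234}, the free ranks coming from Greub's Poincaré polynomial, and the duality $T_k\cong T_{3(n-3)-k-1}$ into the Gysin sequence and read off the groups degree by degree. The free ranks sit in degrees $0,4,7,8,11,15$, and comparing the kernels and cokernels of $\cup\,y_3$ with $H^*(W^8_{2,1};\mathbb Z)$ forces $H^5=0$, then $H^6\cong\mathbb Z_2$ generated by $y_3^2$ (since the Gysin sequence makes $\cup\,y_3\colon H^3\to H^6$ injective while its target is pure torsion), then $H^8\cong\mathbb Z$ and $H^{10}\cong\mathbb Z_2$. Here one must treat the Gysin sequence as producing \emph{extensions} rather than splittings: in total degree $8$ the group $H^8(W^8_{2,1};\mathbb Z)=\mathbb Z$ arises as the non-split extension $0\to\mathbb Z\to\mathbb Z\to\mathbb Z_2\to0$ of $\ker(\cup\,y_3\colon H^6\to H^9)$ by $\mathrm{coker}(\cup\,y_3\colon H^5\to H^8)$, and it is exactly this extension that pins $H^8$ down to $\mathbb Z$ instead of $\mathbb Z\oplus\mathbb Z_2$.

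The main obstacle is that this spectral sequence alone cannot decide the torsion in degrees $7$ and $9$: the possibilities $T^7=T^9=0$ (equivalently $y_3x_4=0$ and $y_3^3=0$) and $T^7=T^9=\mathbb Z_2$ (with $y_3x_4$ and $y_3^3$ nonzero) \emph{both} reproduce the torsion-free answer $H^*(W^8_{2,1};\mathbb Z)\cong\mathbb Z^{12}$ and both satisfy Poincaré duality, because in the second case the extra $\mathbb Z_2$'s are absorbed into the cokernels of $\cup\,y_3$ (they pair perfectly via $T^7\times T^9\to\mathbb Q/\mathbb Z$). To break the tie I would invoke the second fibration $SO(3)\to V_{8,3}\to\widetilde G_{8,3}$: an iterated sphere-bundle computation (e.g. from $V_{8,3}\to S^7$ with fibre $V_{7,2}$) gives $H^*(V_{8,3};\mathbb Z)$ with $\mathbb Z$ in degrees $0,7,11,18$ and $\mathbb Z_2$ only in degrees $6,13$, so that $H^8,H^9,H^{10}$ of $V_{8,3}$ vanish; in the associated spectral sequence (with rows $H^*(\widetilde G_{8,3};\mathbb Z)$ in $q=0,3$ and $H^*(\widetilde G_{8,3};\mathbb Z_2)$ in $q=2$) the surviving $\mathbb Z_2$'s of the second scenario cannot be killed. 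Equivalently, and most cleanly, the mod $2$ Betti numbers of $\widetilde G_{8,3}$ from \cite{basu} together with the universal coefficient theorem give $\dim_{\mathbb Z_2}H^7(\widetilde G_{8,3};\mathbb Z_2)=1$, whence $T^7=0$. This selects $T^7=T^9=0$ and $H^9=0$.

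Finally I would assemble the algebra. With the groups known the relations are essentially forced: $2y_3=0$ since $H^3\cong\mathbb Z_2$; $x_4^3=0$ and $x_7^2=0$ because $H^{12}=0$ and $H^{14}=0$; $y_3^3=0$ because $H^9=0$; and $y_3x_4=0$ because $H^7$ is torsion-free while $y_3x_4$ is $2$-torsion. That the products hit the stated generators follows from the edge homomorphism $p^*\colon H^*(\widetilde G_{8,3};\mathbb Z)\to H^*(W^8_{2,1};\mathbb Z)$, which carries $x_4,x_7$ to generators of $H^4,H^7$ of $W^8_{2,1}$ and, on rereading the twofold extensions, identifies $x_4^2,x_4x_7,x_4^2x_7$ with generators of $H^8,H^{11},H^{15}$; meanwhile the Gysin maps $\cup\,y_3$ being isomorphisms onto the torsion show $y_3^2,y_3x_7,y_3^2x_7$ generate $H^6,H^{10},H^{13}$. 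A concluding check that $\mathbb Z[y_3,x_4,x_7]/\langle2y_3,y_3x_4,y_3^3,x_4^3,x_7^2\rangle$ has exactly the computed group in each degree confirms that the listed relations are complete.
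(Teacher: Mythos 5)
Your proposal is correct and essentially reproduces the paper's proof: the same Leray--Serre spectral sequence for $S^2\to W^8_{2,1}\to\widetilde G_{8,3}$ with input from Theorem \ref{W2n2,1}, the same edge-homomorphism and filtration argument making $p^*$ injective in degrees $4,7,8,11,15$ and yielding $x_8=x_4^2$, $x_{11}=x_4x_7$, $x_{15}=x_4^2x_7$, and the same tie-break for the residual $\mathbb Z_2$-ambiguity in degrees $7$ and $9$ by comparing mod $2$ cohomology with the universal coefficient theorem (the paper uses $H^6(\widetilde G_{8,3};\mathbb Z_2)\cong\mathbb Z_2$ from Proposition 3.1 of \cite{korbas}; your $\dim_{\mathbb Z_2}H^7(\widetilde G_{8,3};\mathbb Z_2)=1$ argument is the identical mechanism). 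Your alternative tie-break via $SO(3)\to V_{8,3}\to\widetilde G_{8,3}$ is asserted rather than verified (the paper deploys that bundle only for $\widetilde G_{10,3}$), but since you also supply the universal-coefficient argument, nothing essential is missing.
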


\begin{proof} From \cite{greub} (p. 496), the Poincar\' e polynomial for $\widetilde G_{8,3}$ is
\[p(\widetilde G_{8,3})=1+x^4+x^7+x^8+x^{11}+x^{15},\]
so we know the free part of the cohomology groups of $\widetilde G_{8,3}$. When we apply results from Section \ref{propertiesGn3}, we see that integral cohomology groups of $\widetilde G_{8,3}$ are:
\begin{table}[h]
\scalebox{0.77}{
\renewcommand{\arraystretch}{1.5} %
\begin{tabular}{c|c|c|c|c|c|c|c|c|c|c|c|c|c|c|c|c}
$k$&0&1&2&3&4&5&6&7&8&9&10&11&12&13&14&15\\\cline{1-17}
$H^k(\widetilde G_{8,3};\mathbb Z)$& $\mathbb Z$ & 0 & 0 & $\mathbb Z_2$ & $\mathbb Z$ & $T_4$ & $T_5$ & $\mathbb Z\oplus T_6$ & $\mathbb Z\oplus T_7$ & $T_6$ & $T_5$ & $\mathbb Z\oplus T_4$ & 0 & $\mathbb Z_2$ & 0 & $\mathbb Z$\\
  \end{tabular}
}
\caption{\label{tab:G83}Integral cohomology groups of $\widetilde G_{8,3}$}
\end{table}

What remains is to determine these four torsion groups.

Recall from Theorem \ref{W2n2,1} the cohomology algebra of $W_{2,1}^8$:
\begin{equation}\label{multiplication_W8}H^*(W^8_{2,1};\mathbb Z)\cong\frac{\mathbb Z[\bar x_2,\bar x_6,\bar x_7]}{\langle \bar x_2^3-2\bar x_{6},\bar x_6^2, \bar x_7^2\rangle}.\end{equation}
 Consider the Leray-Serre spectral sequence for the sphere bundle $S^2\rightarrow W_{2,1}^8\xrightarrow{p} \widetilde G_{8,3}$
where $p$ maps the ordered pair of an oriented plane and a vector to the oriented 3-space they span. Its $E_2$ page is given in Table \ref{tab:G83}.

\begin{center}
\begin{table}[h]
\begin{center}
\scalebox{0.78}{
\renewcommand{\arraystretch}{1.5} %
\begin{tabular}{c|c|c|c|c|c|c|c|c|c|c|c|c|c|c|c|c}
2& $\mathbb Z$ & 0 & 0 & $\mathbb Z_2$ & $\mathbb Z$ & $T_4$ & $T_5$ & $\mathbb Z\oplus T_6$ & $\mathbb Z\oplus T_7$ & $T_6$ & $T_5$ & $\mathbb Z\oplus T_4$ & 0 & $\mathbb Z_2$ & 0 & $\mathbb Z$\\\cline{1-17}
1&&&&&&&&&&&&&&&&\\\cline{1-17}
0& $\mathbb Z$ & 0 & 0 & $\mathbb Z_2$ & $\mathbb Z$ & $T_4$ & $T_5$ & $\mathbb Z\oplus T_6$ & $\mathbb Z\oplus T_7$ & $T_6$ & $T_5$ & $\mathbb Z\oplus T_4$ & 0 & $\mathbb Z_2$ & 0 & $\mathbb Z$\\\cline{1-17}
&0&1&2&3&4&5&6&7&8&9&10&11&12&13&14&15\\
  \end{tabular}
}
\end{center}
\caption{\label{tab:G83}The $E_2$ page for the sphere bundle $S^2\rightarrow W_{2,1}^{8}\rightarrow\widetilde G_{8,3}$}
\end{table}
\end{center}

The only nontrivial differential is $d_3$ so we have $E_2=E_3$ and $E_4=E_\infty$.
Since $H^5(W_{2,1}^8;\mathbb Z)=0$, differential $d_3:E_3^{2,2}\rightarrow E_3^{5,0}$ is onto, so $T_4=0$. Also, for the same reason we have $d_3:E_3^{3,2}\rightarrow E_3^{6,0}$ is one-to-one so $T_5$ contains an element of order two. On the other hand, $H^{12}(W_{2,1}^8;\mathbb Z)=0$ so $d_3:E_3^{10,2}\rightarrow E_3^{13,0}$ is one-to-one and $T_5$ is isomorphic to a subgroup of $\mathbb Z_2$. This gives us $T_5\cong\mathbb Z_2$.  Since $E_3^{5,2}=T_4=0$, differential $d_3:E_3^{5,2}\rightarrow E_3^{8,0}$ is trivial, so $E_\infty^{8,0}\cong\mathbb Z\oplus T_7$, but from (\ref{multiplication_W8}) we see that this group must be free, as $E_\infty^{8,0}$ is a subgroup of the free group $H^8(W_{2,1}^8;\mathbb Z)$, so $T_7=0$.  We have $E_\infty^{9,0}=0$, since it must be free as a subgroup of the free group $H^9(W_{2,1}^8;\mathbb Z)$, so $d_3:E_3^{6,2}\rightarrow E_3^{9,0}$ is onto and since we know $T_5=\mathbb Z_2$, this gives us that $T_6$ is either $0$ or $\mathbb Z_2$.

\begin{table}[h]
\scalebox{0.68}{
\renewcommand{\arraystretch}{1.5} %
\begin{tabular}{c|c|c|c|c|c|c|c|c|c|c|c|c|c|c|c|c}
$k$&0&1&2&3&4&5&6&7&8&9&10&11&12&13&14&15\\\cline{1-17}
$H^k(\widetilde G_{8,3};\mathbb Z)$& $\mathbb Z$ & 0 & 0 & $\mathbb Z_2$ & $\mathbb Z$ & $0$ & $\mathbb Z_2$ & $\mathbb Z\oplus T_6$ & $\mathbb Z$ & $T_6$ & $\mathbb Z_2$ & $\mathbb Z$ & 0 & $\mathbb Z_2$ & 0 & $\mathbb Z$\\\cline{1-17}
$H^k(\widetilde G_{8,3};\mathbb Z_2)$& $\mathbb Z_2$ & 0 & $\mathbb Z_2$ & $\mathbb Z_2$ & $\mathbb Z_2$ & $\mathbb Z_2$ & $\mathbb Z_2\oplus T_6$ & $\mathbb Z_2\oplus T_6$ & $\mathbb Z_2\oplus T_6$ & $\mathbb Z_2\oplus T_6$ & $\mathbb Z_2$ & $\mathbb Z_2$ & $\mathbb Z_2$ & $\mathbb Z_2$ & 0 & $\mathbb Z_2$\\

  \end{tabular}
}
\caption{\label{tab:G83ZZ2} Cohomology groups of $\widetilde G_{8,3}$ with coefficients in $\mathbb Z$ and $\mathbb Z_2$}
\end{table}

Cohomology groups of $\widetilde G_{8,3}$ with coefficients in $\mathbb Z$ and $\mathbb Z_2$ are given in Table \ref{tab:G83ZZ2}. Proposition 3.1 in \cite{korbas} gives us that $H^6(\widetilde G_{8,3};\mathbb Z_2)\cong\mathbb Z_2$, but from Table \ref{tab:G83ZZ2} we see that this group is isomorphic to $\mathbb Z_2\oplus T_6$. This means $T_6=0$.

We have proved so far that the integral cohomology groups of $\widetilde G_{8,3}$ are those given in Table \ref{tab:G83Z}.

\begin{table}[h]
\begin{center}
\scalebox{0.85}{
\renewcommand{\arraystretch}{1.5} %
\begin{tabular}{c|c|c|c|c|c|c|c|c|c|c|c|c|c|c|c|c}
$k$&0&1&2&3&4&5&6&7&8&9&10&11&12&13&14&15\\\cline{1-17}
$H^k(\widetilde G_{8,3};\mathbb Z)$& $\mathbb Z$ & 0 & 0 & $\mathbb Z_2$ & $\mathbb Z$ & $0$ & $\mathbb Z_2$ & $\mathbb Z$ & $\mathbb Z$ & 0 & $\mathbb Z_2$ & $\mathbb Z$ & 0 & $\mathbb Z_2$ & 0 & $\mathbb Z$
  \end{tabular}
}
\caption{\label{tab:G83Z} Integral cohomology groups of $\widetilde G_{8,3}$}
\end{center}
\end{table}

Let us label generators of these groups as $y_3,x_4,y_6,x_7,x_8,y_{10},x_{11},y_{13},x_{15}$ where the index of a generator represents its degree. Some relations are obvious due to dimensions, namely, $y_3^3=0$, $x_4^3=0$, $x_7^2=0$, because these elements are in trivial cohomology groups.
\begin{center}
\begin{table}[h]
\begin{center}
\scalebox{0.9}{
\renewcommand{\arraystretch}{1.5} %
\begin{tabular}{c|c|c|c|c|c|c|c|c|c|c|c|c|c|c|c|c}
2& $1$ &  &  & $y_3$ & $x_4$ &   & $y_6$ & $x_7$ & $x_8$ &   & $y_{10}$ & $x_{11}$ &  & $y_{13}$ &  & $x_{15}$\\\cline{1-17}
1&&&&&&&&&&&&&&&&\\\cline{1-17}
0& $1$ &  &  & $y_3$ & $x_4$ &   & $y_6$ & $x_7$ & $x_8$ &   & $y_{10}$ & $x_{11}$ &  & $y_{13}$ &  & $x_{15}$\\\cline{1-17}
&0&1&2&3&4&5&6&7&8&9&10&11&12&13&14&15\\
  \end{tabular}
}
\caption{\label{tab:G83gen} Generators in the $E_2$ page for the sphere bundle $S^2\rightarrow W_{2,1}^{8}\rightarrow\widetilde G_{8,3}$}
\end{center}
\end{table}
\end{center}

We return now to the spectral sequence for the sphere bundle $S^2\rightarrow W_{2,1}^8\rightarrow \widetilde G_{8,3}$. Since $H^3(W_{2,1}^8;\mathbb Z)=0$, differential $d_3:E_3^{0,2}\rightarrow E_3^{3,0}$ is onto. If we label a generator of $H^2(S^2;\mathbb Z)$ with $s$, we have $d_3(1\otimes s)=y_3\otimes 1$ which then gives us that for any $x\otimes s\in E_3^{k,2}$ 
\[d_3(x\otimes s)=xy_3\otimes 1.\]

 Further, $d_3:E_3^{3,2}\rightarrow E_3^{6,0}$ is an isomorphism and so we get $y_6=y_3^2$. Similarly, $y_{10}=y_3x_7$ and $y_{13}=y_3y_{10}=y_3^2x_7$.

The differential $d_3:E_3^{4,2}\rightarrow E_3^{7,0}$ is multiplication by some integer, but since both groups $H^6(W_{2,1}^8;\mathbb Z)$ and $H^7(W_{2,1}^8;\mathbb Z)$ are isomorphic to $\mathbb Z$, this integer must be zero, i.e., this differential is trivial and so we get 
\begin{equation}\label{eq:y3x4}
y_3x_4=0.
\end{equation}
\indent There is a correspondence between elements of $H^*(\widetilde G_{8,3};\mathbb Z)$ and $H^*(W_{2,1}^8;\mathbb Z)$ which will give us relations between generators of $H^*(\widetilde G_{8,3};\mathbb Z)$  in dimensions $4, 7, 8, 11$ and $15$, where the free part of the cohomology group is nontrivial.
Let us first notice that all differentials $d_3:E_3^{k-3,2}\rightarrow E_3^{k,0}$ are trivial for $k\in\{4,7,8,11,15\}$. We already explained the case $k=7$ when proving the relation (\ref{eq:y3x4}). The same argument works for $k=11$ and the other cases are obvious due to the fact that $E_3^{k-3,2}=0$ for $k\in\{4,8,15\}$. This means that $E_2^{k,0}=E_\infty^{k,0}$, for $k\in\{4,7,8,11,15\}$.\\
\indent Theorem 5.9 in \cite{mccleary} (p. 147) states that the composition
\[H^k(\widetilde G_{8,3};\mathbb Z)=E_2^{k,0}\twoheadrightarrow E_3^{k,0}\twoheadrightarrow \cdots \twoheadrightarrow E_{k+1}^{k,0}\twoheadrightarrow E_\infty^{k,0}\subseteq H^k(W_{2,1}^8;\mathbb Z)\]
is actually the map $p^*:H^k(\widetilde G_{8,3};\mathbb Z)\rightarrow H^k(W_{2,1}^8;\mathbb Z)$. In the case $k\in\{4,7,8,11,15\}$,  $E_2^{k,0}=E_\infty^{k,0}$, so we have that in these dimensions $p^*$ is an injection.\\
\indent The Leray-Serre spectral sequence comes with the filtration of the cohomology groups of the total space
\[0\subseteq F^pH^p\subseteq F^{p-1}H^p\subseteq\cdots\subseteq F^1H^p\subseteq F^0H^p=H^p(W_{2,1}^8;\mathbb Z),\]
and $E_\infty^{r,p-r}\cong F^rH^p/F^{r+1}H^p$.\\
\indent Let $k\in \{4, 7, 11\}$. We have $E_\infty^{k,0}=F^kH^k=F^0H^k=H^k(W_{2,1}^8;\mathbb Z)$, which implies that $p^*:H^k(\widetilde G_{8,3};\mathbb Z)\rightarrow H^k(W_{2,1}^8;\mathbb Z)$ is an isomorphism, and so the generators $x_4, x_7$ and $x_{11}$ can be chosen in such a way that
\begin{equation}\label{x4711}
p^*(x_4)=\bar x_2^2,~p^*(x_7)=\bar x_7,~p^*(x_{11})=\bar x_2^2\bar x_7.
\end{equation}

On the other hand, if $k\in\{8,15\}$ we have 
\[E_\infty^{k,0}=F^kH^k(W_{2,1}^8;\mathbb Z)=F^{k-1}H^k=2F^{k-2}H^k=2F^0H^k=2H^k(W_{2,1}^8;\mathbb Z),\]
since $E_\infty^{k-2,2}\cong F^{k-2}H^k/F^{k-1}H^k\cong\mathbb Z_2$. The generators $x_8$ and $x_{15}$ can now be chosen with properties
\begin{equation}\label{x815}
p^*(x_8)=2\bar x_2\bar x_6,~p^*(x_{15})=2\bar x_2\bar x_6\bar x_7.
\end{equation}
\indent From relations in the algebra (\ref{multiplication_W8}) and equations (\ref{x4711}) and (\ref{x815}), we get
\[\begin{split}p^*(x_8)&=2\bar x_2\bar x_6=\bar x_2^4=p^*(x_4^2),\\
p^*(x_{11})&=\bar x_2^2\bar x_7=p^*(x_4x_7),\\
p^*(x_{15})&=2\bar x_2\bar x_6\bar x_7=\bar x_2^4\bar x_7=p^*(x_4^2x_7).
\end{split}\]
\indent Since $p^*$ is an injection in dimensions 8, 11 and 15, we have the following relations in $H^*(\widetilde G_{8,3};\mathbb Z)$:
\[x_8=x_4^2,~x_{11}=x_4x_7,~x_{15}=x_4^2x_7,\]
which, in addition to previous relations, give the specified structure of the algebra $H^*(\widetilde G_{8,3};\mathbb Z)$.
\end{proof}

\begin{remark}
If $\rho:\mathbb Z\rightarrow\mathbb Z_2$ is the modulo 2 map, then
\[\rho_*(y_3)=\widetilde w_3,~\rho_*(x_4)=\widetilde w_2^2,~\rho_*(x_7)=a_7,\]
where $\widetilde w_2, \widetilde w_3$ and $a_7$ are generators of $H^*(\widetilde G_{8,3};\mathbb Z_2)$ from Proposition 3.1 in \cite{korbas}. This is proven by using the universal coefficient theorem in the same way as in the proof of Theorem \ref{G2n+1,2}.
\end{remark}

\section{Cohomology algebra $H^*(\widetilde G_{10,3};\mathbb Z)$}
\label{G103}

In the following theorem we will prove that the integral cohomology algebra of $\widetilde G_{10,3}$ is isomorphic to the quotient of the polynomial algebra in five variables with coefficients in $\mathbb Z$ by the sum of ideals $\mathcal I$ and $\mathcal J$. Since the dimension of $\widetilde G_{10,3}$ is 21, we know that $H^k(\widetilde G_{10,3};\mathbb Z)=0$ for $k>21$, so the ideal $\mathcal J$ will contain all monomials of (cohomological) degree larger than 21. The ideal $\mathcal I$ will be generated by all homogeneous polynomials corresponding to the relations in cohomology groups up to dimension 21.

\begin{theorem}\label{ringG103}
The integral cohomology algebra of $\widetilde G_{10,3}$  is given by
\[H^*(\widetilde G_{10,3};\mathbb Z)\cong \frac{\mathbb Z[y_3,x_4,x_9,x_{12},x_{13}]}{\mathcal I+\mathcal J},\]
where $y_3\in H^3(\widetilde G_{10,3};\mathbb Z)$, $x_i\in H^i(\widetilde G_{10,3};\mathbb Z)$, for $i\in\{4,9,12,13\}$, the ideal $\mathcal I$  is generated by polynomials:
\[\begin{split}
&2y_3,~ y_3^3,~  y_3^2x_4,~  y_3x_4^2,~  x_4^3-2x_{12},~ y_3x_9,~  x_4x_9-2x_{13},\\  
&y_3x_{13}-x_4x_{12},~ x_9^2,~ y_3^2x_{12},~  x_4^2x_{12},~  x_9x_{12}-x_4^2x_{13};
\end{split}\]
and the ideal $\mathcal J$ is generated by monomials:
\[y_3^ax_4^bx_9^cx_{12}^dx_{13}^e~ \text{such that}~ 3a+4b+9c+12d+13e>21.\]
\end{theorem}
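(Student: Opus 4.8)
The plan is to follow the template of the proof of Theorem \ref{ringG83}, running the Leray--Serre spectral sequence of the sphere bundle $S^2\to W_{2,1}^{10}\xrightarrow{p}\widetilde G_{10,3}$ and reading off both the groups and the ring from the known algebra $H^*(W_{2,1}^{10};\mathbb Z)$, supplemented by the bundle $SO(3)\to V_{10,3}\to\widetilde G_{10,3}$ to handle the extra torsion. First I would record the free part: from Greub the Poincar\'e polynomial is $1+t^4+t^8+t^9+t^{12}+t^{13}+t^{17}+t^{21}$, so the free ranks are $1$ in degrees $0,4,8,9,12,13,17,21$ and $0$ elsewhere. Lemma \ref{H01234} supplies the first and last four groups and the freeness of $H^4$, and Poincar\'e duality $T_k\cong T_{20-k}$ reduces the problem to the torsion groups $T_4,\dots,T_{10}$. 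The candidate generators are $y_3\in H^3$, $x_4\in H^4$, $x_9\in H^9$, $x_{12}\in H^{12}$, $x_{13}\in H^{13}$.

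Next I would set up the $W_{2,1}^{10}$ spectral sequence. By Theorem \ref{W2n2,1} (with $n=5$) the algebra $H^*(W_{2,1}^{10};\mathbb Z)=\mathbb Z[\bar x_2,\bar x_8,\bar x_9]/\langle\bar x_2^4-2\bar x_8,\bar x_8^2,\bar x_9^2\rangle$ is torsion-free, with a monomial basis I would list degree by degree. The only nontrivial differential is $d_3$; since $H^3(W_{2,1}^{10})=0$, the map $d_3\colon E_3^{0,2}\to E_3^{3,0}\cong\mathbb Z_2$ is onto, which defines $y_3$ by $d_3(1\otimes s)=y_3$ and forces $d_3(x\otimes s)=xy_3$. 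Comparing, column by column, the associated graded pieces $E_\infty^{k,0}$ and $E_\infty^{k-2,2}$ of $H^k(W_{2,1}^{10})$ with those free groups, and using that each bottom piece $E_\infty^{k,0}$ is a subgroup of a free group and hence free, I expect to get $T_4=0$, $T_5\cong\mathbb Z_2$ (generated by $y_3^2$), $T_7=0$, and that each of $T_6,T_8,T_9,T_{10}$ is $0$ or $\mathbb Z_2$, cut out respectively by the classes $\langle y_3x_4\rangle$, $\langle y_3^3\rangle$, $\langle y_3^2x_4\rangle$, $\langle y_3x_4^2\rangle$. These residual $\mathbb Z_2$-ambiguities are insensitive to the torsion-free $S^2$-fibre, and this is where I would bring in the mod $2$ Betti numbers from \cite{korbas,basu} together with the $SO(3)\to V_{10,3}\to\widetilde G_{10,3}$ spectral sequence, whose fibre carries $2$-torsion ($H^2(SO(3);\mathbb Z)\cong\mathbb Z_2$) and therefore detects these classes, to conclude $T_6\cong\mathbb Z_2$ and $T_8=T_9=T_{10}=0$.

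For the ring structure, most relations come for free from the now-known groups: $y_3^3$ and $y_3x_9$ lie in the torsion-free groups $H^9$ and $H^{12}$ and hence vanish, while $y_3^2x_4,\,y_3x_4^2,\,x_9^2,\,y_3^2x_{12},\,x_4^2x_{12}$ lie in zero groups. The relations $x_4^3=2x_{12}$, $x_4x_9=2x_{13}$ and $x_9x_{12}=x_4^2x_{13}$ I would obtain exactly as in Theorem \ref{ringG83}, via the edge homomorphism $p^*\colon H^*(\widetilde G_{10,3})\to H^*(W_{2,1}^{10})$: in degrees $4,9,12,13,21$ the filtration forces $p^*$ to be injective, and a normalization computation (giving $p^*(x_4)=\bar x_2^2$, $p^*(x_9)=2\bar x_9$, $p^*(x_{12})=\bar x_2^2\bar x_8$, $p^*(x_{13})=\bar x_2^2\bar x_9$, where the factor $2$ in $p^*(x_9)$ reflects $E_\infty^{7,2}\cong\mathbb Z_2$) turns the single relation $\bar x_2^4=2\bar x_8$ and the products in $H^*(W_{2,1}^{10})$ into the stated identities.

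The hard part will be the relation $y_3x_{13}=x_4x_{12}$, which lives in $H^{16}\cong\mathbb Z_2$: since $H^{16}(W_{2,1}^{10})=0$ the map $p^*$ is zero there and cannot distinguish the two classes, so the $W_{2,1}$ bundle is useless for it. I would settle it by passing to mod $2$ coefficients --- the reduction $\rho\colon H^{16}(\widetilde G_{10,3};\mathbb Z)\to H^{16}(\widetilde G_{10,3};\mathbb Z_2)$ is injective on the $\mathbb Z_2$ torsion, so it suffices to check $\rho_*(y_3)\rho_*(x_{13})=\rho_*(x_4)\rho_*(x_{12})$ in the mod $2$ ring of \cite{korbas,basu} (equivalently, reading this product off the $V_{10,3}$ spectral sequence). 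Finally I would confirm that $\mathcal I+\mathcal J$ is the full relation ideal by verifying, degree by degree, that the quotient $\mathbb Z[y_3,x_4,x_9,x_{12},x_{13}]/(\mathcal I+\mathcal J)$ has exactly the rank and $2$-torsion of the cohomology groups determined above.
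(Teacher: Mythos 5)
Your outline tracks the paper's actual proof closely---the same two spectral sequences (for $S^2\to W_{2,1}^{10}\to\widetilde G_{10,3}$ and $SO(3)\to V_{10,3}\to\widetilde G_{10,3}$), the same torsion bookkeeping, the same edge-homomorphism normalization of $p^*$---but there is a genuine gap exactly at the step you yourself single out as hard, the relation $y_3x_{13}=x_4x_{12}$. Your premise that ``$H^{16}(W_{2,1}^{10};\mathbb Z)=0$ makes the $W_{2,1}$ bundle useless'' in degree $16$ is backwards: that vanishing is precisely what the paper exploits. Since the total degree-$16$ column of $E_\infty$ must vanish, the differential $d_3\colon E_3^{13,2}\to E_3^{16,0}$, i.e.\ $x_{13}\otimes s\mapsto y_3x_{13}$, is \emph{onto}, so $y_3x_{13}$ is the generator $y_{16}$ of $H^{16}(\widetilde G_{10,3};\mathbb Z)\cong\mathbb Z_2$ (this is one of the identifications $y_6=y_3^2,\ y_7=y_3x_4,\ y_{15}=y_3x_{12},\ y_{16}=y_3x_{13},\ y_{19}=y_3^2x_{13}$, which your proposal never derives). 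Given that, it only remains to show $x_4x_{12}\neq0$, which the paper gets from naturality of the universal coefficient theorem: $\rho_*$ is surjective in degrees $4$ and $12$ (the relevant $\mathrm{Ext}$ groups vanish), so $\rho_*(x_4)=\widetilde w_2^2$, $\rho_*(x_{12})=a_{12}$, and $\widetilde w_2^2a_{12}\neq0$ by Proposition 3.1 of \cite{korbas}. Your route instead reduces the relation to the identity $\widetilde w_3a_{13}=\widetilde w_2^2a_{12}$ in the mod $2$ ring; but that product is not supplied by the cited sources (\cite{korbas} gives the groups and the specific generators $\widetilde w_2^2,\,a_{12},\,\widetilde w_2^2a_{12}$; the ring description in \cite{basu} is incomplete precisely for products involving the exceptional classes), and the $V_{10,3}$ spectral sequence does not compute cup products in the base. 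As written, your plan would stall here, or worse, presuppose the very product it is meant to establish.

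Two smaller points. First, you list $p^*$ as injective only in degrees $4,9,12,13,21$; you also need degrees $8$ and $17$, with the normalizations $p^*(x_8)=2\bar x_8$ and $p^*(x_{17})=2\bar x_8\bar x_9$, to prove $x_8=x_4^2$ and $x_{17}=x_4x_{13}$---without these (and the $y$-identifications above) your concluding ``degree-by-degree size check'' only shows the presented ring has the same cardinality as $H^*(\widetilde G_{10,3};\mathbb Z)$ in each degree, not that the five classes actually generate. Second, on the positive side, your observation that the $W_{2,1}^{10}$ sequence alone pins each of $T_6,T_8,T_9,T_{10}$ to $0$ or $\mathbb Z_2$ is correct and is a mild shortcut: e.g.\ $H^7(W_{2,1}^{10};\mathbb Z)=0$ forces $T_6=\langle y_3x_4\rangle$, and torsion-freeness of $H^{11}(W_{2,1}^{10};\mathbb Z)$ forces $T_{10}=\langle y_3x_4^2\rangle$, whereas the paper obtains these two bounds from the $SO(3)$-bundle (using $H^{14}(V_{10,3};\mathbb Z)=H^{18}(V_{10,3};\mathbb Z)=H^{19}(V_{10,3};\mathbb Z)=0$ from \cite{mimura}). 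You still need a positive input to see $T_6\cong\mathbb Z_2$ rather than $0$: the paper's monomorphism--epimorphism argument $d_2\colon\bar E_2^{15,3}\to\bar E_2^{17,2}$ in the $V_{10,3}$ sequence, or equivalently the mod $2$ Betti number $\dim H^6(\widetilde G_{10,3};\mathbb Z_2)=2$, which your appeal to \cite{korbas,basu} can legitimately supply.
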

\begin{proof}  From \cite{greub} (p. 496), the Poincar\' e polynomial for $\widetilde G_{10,3}$ is
\[p(\widetilde G_{10,3})=1+x^4+x^8+x^9+x^{12}+x^{13}+x^{17}+x^{21},\]
so we know the free part of the cohomology groups of $\widetilde G_{10,3}$.

When we apply results from Section \ref{propertiesGn3}, we see that integral cohomology groups of $\widetilde G_{10,3}$ are:
\[\begin{split}
H^*(\widetilde G_{10,3};\mathbb Z)=(&\mathbb Z,0,0,\mathbb Z_2,\mathbb Z,T_4,T_5,T_6, \mathbb Z\oplus T_7,\mathbb Z\oplus T_8,T_9, T_{10}, \mathbb Z\oplus T_9, \\
&\mathbb Z\oplus T_8,T_7,T_6,T_5,\mathbb Z \oplus T_4,0,\mathbb Z_2,0,\mathbb Z).
\end{split}\]

What remains is to determine these seven torsion groups.

Recall from Theorem \ref{W2n2,1} the cohomology algebra of $W_{2,1}^{10}$
\begin{equation}
\label{multiplication_W10}
H^*(W^{10}_{2,1};\mathbb Z)\cong\frac{\mathbb Z[\bar x_2,\bar x_8,\bar x_9]}{\langle \bar x_2^4-2\bar x_8,\bar x_8^2, \bar x_9^2\rangle}.
\end{equation}
 Consider the Leray-Serre spectral sequences for the sphere bundle $S^2\rightarrow W_{2,1}^{10}\xrightarrow{p} \widetilde G_{10,3}$ and for the fiber bundle $SO(3)\rightarrow V_{10,3}\rightarrow \widetilde G_{10,3}$. The cohomology ring $H^*(V_{10,3};\mathbb Z)$ is known from Theorem 2.3 in \cite{mimura}.

In order to differentiate these spectral sequences, we will label the first one as $E$ and the second one as $\bar E$. Arguments similar as in the proof of Theorem \ref{ringG83} give us $T_4=0$, $T_5=\mathbb Z_2$ and $T_7=0$. Since $H^9(W_{2,1}^{10};\mathbb Z)\cong \mathbb Z$, $E_\infty^{9,0}$ must be free, so $d_3:E_3^{6,2}\rightarrow E_3^{9,0}$ from Table \ref{tab:G103E1} maps $\mathbb Z_2$ onto $T_8$, i.e., $T_8$ is either $\mathbb Z_2$ or trivial. 
From Table \ref{tab:G103E1} we also see that $H^8(\widetilde G_{10,3};\mathbb Z_2)\cong \mathbb Z_2\oplus T_8$, but from Proposition 3.1 in \cite{korbas} this group is isomorphic to $\mathbb Z_2$, so we get $T_8=0$.

\begin{table}[h]
\begin{center}
\scalebox{0.65}{
\renewcommand{\arraystretch}{1.5} %
\begin{tabular}{c|c|c|c|c|c|c|c|c|c|c|c|c|c|c|c|c|c|c|c|c|c|c}
2& $\mathbb Z$ & 0 & 0 & $\mathbb Z_2$ & $\mathbb Z$ & $0$ & $\mathbb Z_2$ & $T_6$ & $\mathbb Z$ & $\mathbb Z\oplus T_8$ & $T_9$ & $T_{10}$ & $\mathbb Z \oplus T_9$ & $\mathbb Z\oplus T_8$ & $0$ & $T_6$ & $\mathbb Z_2$ & $\mathbb Z$ & $0$ & $\mathbb Z_2$ & $0$ & $\mathbb Z$\\\cline{1-23}
1&&&&&&&&&&&&&&&&&&&&&&\\\cline{1-23}
0& $\mathbb Z$ & 0 & 0 & $\mathbb Z_2$ & $\mathbb Z$ & $0$ & $\mathbb Z_2$ & $T_6$ & $\mathbb Z$ & $\mathbb Z\oplus T_8$ & $T_9$ & $T_{10}$ & $\mathbb Z \oplus T_9$ & $\mathbb Z\oplus T_8$ & $0$ & $T_6$ & $\mathbb Z_2$ & $\mathbb Z$ & $0$ & $\mathbb Z_2$ & $0$ & $\mathbb Z$\\\cline{1-23}
&0&1&2&3&4&5&6&7&8&9&10&11&12&13&14&15&16&17&18&19&20&21\\
  \end{tabular}
}
\end{center}
\caption{\label{tab:G103E1}The $E_2$ page for the sphere bundle $S^2\rightarrow W_{2,1}^{10}\rightarrow\widetilde G_{10,3}$}
\end{table}

Since $H^{18}(V_{10,3};\mathbb Z)=0$ \cite{mimura}, Table \ref{tab:G103E2} gives us that differential $d_3:\bar E_3^{16,2}\rightarrow \bar E_3^{19,0}$ is an isomorphism and $\bar E_4^{19,0}=0$. This means that $d_2:\bar E_2^{15,3}\rightarrow \bar E_2^{17,2}$ is a monomorphism, as this is the only nontrivial differential with domain $E_k^{15,3}$. On the other hand, since $H^{19}(V_{10,3};\mathbb Z)=0$  \cite{mimura}, this differential must also be an epimorphism. So we get $T_6\cong \mathbb Z_2$.
\begin{table}[h]
\begin{center}
\scalebox{0.8}{
\renewcommand{\arraystretch}{1.5} %
\begin{tabular}{c|c|c|c|c|c|c|c}
3  &  $0$ & $T_6$ & $\mathbb Z_2$ & $\mathbb Z$ & $0$ & $\mathbb Z_2$ & 0\\\cline{1-8}
2& $\mathrm{Tor}(T_6,\mathbb Z_2)$ & $T_6\otimes \mathbb Z_2\oplus \mathbb Z_2$ & $\mathbb Z_2$ & $\mathbb Z_2$ & $\mathbb Z_2$ & $\mathbb Z_2$ &  0\\\cline{1-8}
1&&&&&&&\\\cline{1-8}
0 &   $0$ & $T_6$ & $\mathbb Z_2$ & $\mathbb Z$ & $0$ & $\mathbb Z_2$ & 0\\\cline{1-8}
&14&15&16&17&18&19&20\\
  \end{tabular}
}
\caption{\label{tab:G103E2}The part of the $\bar E_2$ page for the fiber bundle $SO(3)\rightarrow V_{10,3}\rightarrow\widetilde G_{10,3}$}
\end{center}
\end{table}

If we return to Table \ref{tab:G103E1}, differential $d_3:E_3^{7,2}\rightarrow E_3^{10,0}$ must be onto since $H^{10}(W_{2,1}^{10};\mathbb Z)$ is free. This means that $T_9$ is either $\mathbb Z_2$ or trivial, as now we know that $E_3^{7,2}\cong T_6\cong\mathbb Z_2$. From Table \ref{tab:G103E1} we get that $H^9(\widetilde G_{10,3};\mathbb Z_2)\cong \mathbb Z_2\oplus T_9$, but from Proposition 3.1 in \cite{korbas} this group is isomorphic to $\mathbb Z_2$, so it must be $T_9=0$.

What remains is to determine the torsion group $T_{10}$. In Table \ref{tab:G103E3} the other part of the second spectral sequence is given. Since $H^{14}(V_{10,3};\mathbb Z)=0$  \cite{mimura}, differential $d_3:\bar E_3^{12,2}\rightarrow\bar E_3^{15,0}$ is an isomorphism, so the only possible nontrivial differential with domain $\bar E_k^{11,3}$ is $d_2:\bar E_2^{11,3}\rightarrow \bar E_2^{13,2}$ and it must be a monomorphism because of the same argument $H^{14}(V_{10,3};\mathbb Z)=0$. This gives us that $T_{10}$ is isomorphic to a subgroup of $\mathbb Z_2$. When we combine this result with the fact that $H^{10}(\widetilde G_{10,3};\mathbb Z_2)=0$ from Proposition 3.1 in \cite{korbas}, we get $T_{10}=0$.

\begin{table}[h]
\begin{center}
\scalebox{0.8}{
\renewcommand{\arraystretch}{1.5} %
\begin{tabular}{c|c|c|c|c|c|c}
3  &  $0$ & $T_{10}$ & $\mathbb Z$ & $\mathbb Z$ & $0$ & $\mathbb Z_2$\\\cline{1-7}
2  &  $\mathrm{Tor}(T_{10},\mathbb Z_2)$ & $T_{10}\otimes \mathbb Z_2$ & $\mathbb Z_2$ & $\mathbb Z_2$ & $\mathbb Z_2$ & $\mathbb Z_2\oplus\mathbb Z_2$\\\cline{1-7}
1&&&&&&\\\cline{1-7}
0   &  $0$ & $T_{10}$ & $\mathbb Z$ & $\mathbb Z$ & $0$ & $\mathbb Z_2$\\\cline{1-7}
&10&11&12&13&14&15\\
  \end{tabular}
}
\end{center}
\caption{\label{tab:G103E3}The part of the $\bar E_2$ page for the fiber bundle $SO(3)\rightarrow V_{10,3}\rightarrow\widetilde G_{10,3}$}
\end{table}

Finally, we have integral cohomology groups of $\widetilde G_{10,3}$:

\begin{table}[h]
\scalebox{0.675}{
\renewcommand{\arraystretch}{1.5} %
\begin{tabular}{c|c|c|c|c|c|c|c|c|c|c|c|c|c|c|c|c|c|c|c|c|c|c}
$k$&0&1&2&3&4&5&6&7&8&9&10&11&12&13&14&15&16&17&18&19&20&21\\\cline{1-23}
$H^k(\widetilde G_{10,3};\mathbb Z)$& $\mathbb Z$ & 0 & 0 & $\mathbb Z_2$ & $\mathbb Z$ & $0$ & $\mathbb Z_2$ & $\mathbb Z_2$ & $\mathbb Z$ & $\mathbb Z$ & 0 & 0 & $\mathbb Z$ & $\mathbb Z$ & $0$ & $\mathbb Z_2$ & $\mathbb Z_2$ & $\mathbb Z$ & $0$ & $\mathbb Z_2$ & $0$ & $\mathbb Z$\\
  \end{tabular}
}
\caption{\label{tab:G103groups}Integral cohomology groups of $\widetilde G_{10,3}$}
\end{table}

Let us label generators of these groups as $y_3, x_4, y_6, y_7, x_8, x_9, x_{12},x_{13},$ $y_{15}, y_{16}, x_{17}, y_{19}, x_{21}$ where the index of a generator represents its degree.  In the same way as in the proof of Theorem \ref{ringG83}, we conclude that the only nontrivial differential $d_3:E_3^{k,2}\rightarrow E_3^{k+3,0}$ is given by
\[d_3(x\otimes s)=xy_3\otimes 1,\]
for $x\in H^k(\widetilde G_{10,3};\mathbb Z)$ and that the following relations hold:
\[y_6=y_3^2, y_7=y_3x_4, y_{15}=y_3x_{12}, y_{16}=y_3x_{13}, y_{19}=y_3^2x_{13}.\]
\indent We will now use Theorem 5.9 in \cite{mccleary} (p. 147) in the same way as in the proof of Theorem \ref{ringG83} in order to determine relations between generators.
\begin{center}
\begin{table}[h]
\begin{center}
\scalebox{0.7}{
\begingroup
\renewcommand{\arraystretch}{1.5} %
\begin{tabular}{c|c|c|c|c|c|c|c|c|c|c|c|c|c|c|c|c|c|c|c|c|c|c}
2& $\mathbb Z$\tikzmark{g} & 0 & 0 & $\mathbb Z_2$\tikzmark{i} & $\mathbb Z$\tikzmark{k} & $0$ & $\mathbb Z_2$ & $\mathbb Z_2$ & $\mathbb Z$ & $\mathbb Z$ & $0$ & $0$ & $\mathbb Z$\tikzmark{m} & $\mathbb Z$\tikzmark{o} & $0$ & $\mathbb Z_2$ & $\mathbb Z_2$\tikzmark{q} & $\mathbb Z$ & $0$ & $\mathbb Z_2$ & $0$ & $\mathbb Z$\\\cline{1-23}
1&&&&&&&&&&&&&&&&&&&&&&\\\cline{1-23}
0& $\mathbb Z$ & 0 & 0 & \tikzmark{h}$\mathbb Z_2$ & $\mathbb Z$ & $0$ & \tikzmark{j}$\mathbb Z_2$ & \tikzmark{l}$\mathbb Z_2$ & $\mathbb Z$ & $\mathbb Z$ & $0$ & $0$ & $\mathbb Z$ & $\mathbb Z$ & $0$ & \tikzmark{n}$\mathbb Z_2$ & \tikzmark{p}$\mathbb Z_2$ & $\mathbb Z$ & $0$ & \tikzmark{r}$\mathbb Z_2$ & $0$ & $\mathbb Z$\\\cline{1-23}
&\bf{0}&1&\bf{2}&3&\bf{4}&5&\bf{6}&7&\bf{8}&\bf{9}&\bf{10}&\bf{11}&\bf{12}&\bf{13}&\bf{14}&\bf{15}&16&\bf{17}&18&\bf{19}&20&\bf{21}\\
  \end{tabular}

  \begin{tikzpicture}[overlay, remember picture, shorten >=.5pt, shorten <=.5pt, transform canvas={yshift=.25\baselineskip}]
    \draw [->>] ([yshift=.75pt]{pic cs:g}) -- ({pic cs:h});
    \draw [>->>] ([yshift=.75pt]{pic cs:i}) -- ({pic cs:j});
    \draw [->>] ([yshift=.75pt]{pic cs:k}) -- ({pic cs:l});
    \draw [->>] ([yshift=.75pt]{pic cs:m}) -- ({pic cs:n});
    \draw [->>] ([yshift=.75pt]{pic cs:o}) -- ({pic cs:p});
    \draw [>->>] ([yshift=.75pt]{pic cs:q}) -- ({pic cs:r});
  \end{tikzpicture}
\endgroup
}
\caption{\label{tab:G103svegr}The $E_3$ page for the sphere bundle $S^2\rightarrow W_{2,1}^{10}\rightarrow \widetilde G_{10,3}$}
\end{center}
\end{table}
\end{center}

 In the case $k\in\{4,8,9,12,13,17,21\}$ we see from Table \ref{tab:G103svegr} (where all but trivial differentials are represented by arrows and dimensions of nontrivial cohomology groups of $W_{2,1}^{10}$ are in bold) that $E_2^{k,0}=E_\infty^{k,0}$, so we have that in these dimensions $p^*$ is an injection.
\\
\indent Let $k\in \{4, 12, 13\}$. We have $E_\infty^{k,0}=F^kH^k=F^0H^k=H^k(W_{2,1}^{10};\mathbb Z)$, so $p^*$ maps a generator of $H^k(\widetilde G_{10,3};\mathbb Z)$ to the specified (in (\ref{multiplication_W10})) generator of $H^k(W_{2,1}^{10};\mathbb Z)$, i.e., we can choose generators $x_4, x_{12}$ and $x_{13}$ to have
\begin{equation}\label{x41213}
p^*(x_4)=\bar x_2^2,~p^*(x_{12})=\bar x_2^2\bar x_8,~p^*(x_{13})=\bar x_2^2\bar x_9.
\end{equation}
\indent On the other hand, if $k\in\{8,9,17,21\}$ we have 
\[E_\infty^{k,0}=F^kH^k(W_{2,1}^{10};\mathbb Z)=F^{k-1}H^k=2F^{k-2}H^k=2F^0H^k=2H^k(W_{2,1}^{10};\mathbb Z),\]
since $E_\infty^{k-2,2}\cong F^{k-2}H^k/F^{k-1}H^k\cong\mathbb Z_2$. So we choose $x_8, x_9, x_{17}$ and $x_{21}$ with properties
\begin{equation}\label{x891721}
p^*(x_8)=2\bar x_8,~p^*(x_9)=2\bar x_9,~p^*(x_{17})=2\bar x_8\bar x_9,~p^*(x_{21})=2\bar x_2^2\bar x_8\bar x_9.
\end{equation}
\indent From relations in the algebra (\ref{multiplication_W10}) and equations (\ref{x41213}) and (\ref{x891721}), we get
\[\begin{split}p^*(x_8)&=2\bar x_8=\bar x_2^4=p^*(x_4^2),\\
p^*(2x_{12})&=2\bar x_2^2\bar x_8=\bar x_2^6=p^*(x_4^3),\\
p^*(2x_{13})&=2\bar x_2^2\bar x_9=p^*(x_4x_9),\\
p^*(x_{17})&=2\bar x_8\bar x_9=\bar x_2^2\bar x_2^2\bar x_9=p^*(x_4x_{13}),\\
p^*(x_{21})&=2\bar x_2^2\bar x_8\bar x_9=\bar x_2^4\bar x_2^2\bar x_9=p^*(x_4^2x_{13})=p^*(x_9x_{12}).
\end{split}\]
\indent Since $p^*$ is an injection in these dimensions, we have the following relations in $H^*(\widetilde G_{10,3};\mathbb Z)$:
\[x_8=x_4^2,~2x_{12}=x_4^3,~2x_{13}=x_4x_9,~x_{17}=x_4x_{13},~x_{21}=x_4^2x_{13}=x_9x_{12}.\]
\indent This means that cohomology algebra $H^*(\widetilde G_{10,3};\mathbb Z)$ is generated by five generators $y_3, x_4, x_9, x_{12}$ and $x_{13}$.
\\
\indent Some relations are obvious due to dimension. We have $y_3^2x_4=0$, $y_3x_4^2=0$, $x_9^2=0$, $y_3^2x_{12}=0$, $x_4^2x_{12}=y_3x_4x_{13}=0$ since these elements are in trivial cohomology groups. Other relations are obvious since cup product of $y_3$ and any other element must be trivial in torsion free groups, so $y_3^3=0$ and $y_3x_9=0$. Also, $y_3^ax_4^bx_9^cx_{12}^dx_{13}^e=0$ whenever $3a+4b+9c+12d+13e>21$ since the dimension of $\widetilde G_{10,3}$ is 21.
\\
\indent What remains is to prove the relation $y_3x_{13}=x_4x_{12}$. We will use naturality of the universal coefficient theorem and Proposition 3.1 in \cite{korbas}. This proposition gives us
\[H^4(\widetilde G_{10,3};\mathbb Z_2)=\mathbb Z_2\langle \widetilde w_2^2\rangle, H^{12}(\widetilde G_{10,3};\mathbb Z_2)=\mathbb Z_2\langle  a_{12}\rangle, H^{16}(\widetilde G_{10,3};\mathbb Z_2)=\mathbb Z_2\langle \widetilde w_2^2a_{12}\rangle.\]
\indent We have the following commutative diagram:
\[
\begin{tikzcd}[column sep = tiny, font = \small]
0
\arrow{r}
&
\mathrm{Ext}(H_{k-1}(\widetilde G_{10,3}),\mathbb Z)
\arrow{d}{\rho_*}
\arrow{r}
&
H^k(\widetilde G_{10,3};\mathbb Z)
\arrow{d}{\rho_*}
\arrow{r}
&
\mathrm{Hom}(H_k(\widetilde G_{10,3}),\mathbb Z)
\arrow{d}{\rho_*}
\arrow{r}
&
0
\\
0
\arrow{r}
&
\mathrm{Ext}(H_{k-1}(\widetilde G_{10,3}),\mathbb Z_2)
\arrow{r}
&
H^k(\widetilde G_{10,3};\mathbb Z_2)
\arrow{r}
&
\mathrm{Hom}(H_k(\widetilde G_{10,3}),\mathbb Z_2)
\arrow{r}
&
0
\end{tikzcd}
\]

For $k=4$ both $\mathrm{Ext}$-groups are zero and $\rho_*:\mathrm{Hom}(H_4(\widetilde G_{10,3}),\mathbb Z)\rightarrow\mathrm{Hom}(H_4(\widetilde G_{10,3}),\mathbb Z_2)$ is an epimorphism, so $\rho_*:H^4(\widetilde G_{10,3};\mathbb Z)\rightarrow H^4(\widetilde G_{10,3};\mathbb Z_2)$ is an epimorphism, i.e., $\rho_*(x_4)=\widetilde w_2^2$. The situation is the same for $k=12$, so $\rho_*(x_{12})=a_{12}$. Now we have $\rho_*(x_4x_{12})=\widetilde w_2^2a_{12}\ne 0$, so $x_4x_{12}\ne 0$, i.e., $x_4x_{12}=y_3x_{13}$.
\end{proof}

\begin{remark}
    Since $\mathbb Z[y_3,x_4,x_9,x_{12},x_{13}]$ is a Noetherian ring, it is possible to find a finite generating set for the ideal $\mathcal I + \mathcal J$ from Theorem \ref{ringG103}. One can do this by starting with the generators of $\mathcal I$, then going through all elements $y_3^ax_4^bx_9^cx_{12}^dx_{13}^e$ with $3a+4b+9c+12d+13e = k$ for $k=22,23,24,\ldots$ to see which elements are already in $\mathcal I$, and then adding those that are not. It suffices to do this checking up to dimension $k=34$ since all the monomials of degree greater than $34$ are multiples of those in degrees $22\leqslant k \leqslant 34$ (this is because the maximal degree of generators $y_3, x_4, x_9, x_{12}$ and $x_{13}$ is $13$). After going through this procedure, we obtain that there are only a few elements that need to be added to the generating set of $\mathcal I$ in order to get the whole ideal $\mathcal I + \mathcal J$, namely $x_9x_{13}, x_{12}^2, x_{12}x_{13}$ and $x_{13}^2$. Finally, we can write the integral cohomology algebra of $\widetilde G_{10,3}$ as
    \[H^*(\widetilde G_{10,3};\mathbb Z) \cong \frac{\mathbb Z[y_3,x_4,x_9,x_{12},x_{13}]}{\mathcal K},\]
    where $\mathcal K$ is generated by polynomials:
    \[\begin{split}
&2y_3,~ y_3^3,~  y_3^2x_4,~  y_3x_4^2,~  x_4^3-2x_{12},~ y_3x_9,~  x_4x_9-2x_{13},\\  
&y_3x_{13}-x_4x_{12},~ x_9^2,~ y_3^2x_{12},~  x_4^2x_{12},~  x_9x_{12}-x_4^2x_{13},\\
&x_9x_{13},~ x_{12}^2,~ x_{12}x_{13},~ x_{13}^2.
\end{split}\]
\end{remark}

\begin{remark}
If $\rho:\mathbb Z\rightarrow\mathbb Z_2$ is the modulo 2 map, then
\[\rho_*(y_3)=\widetilde w_3,~\rho_*(x_4)=\widetilde w_2^2,~\rho_*(x_9)=\widetilde w_2^3\widetilde w_3,~\rho_*(x_{12})=a_{12},~\rho_*(x_{13})=a_{13},\]
where $\widetilde w_2, \widetilde w_3, a_{12}$ and $a_{13}$ are generators of $H^*(\widetilde G_{10,3};\mathbb Z_2)$ from Proposition 3.1 in \cite{korbas}. Two of these relations we have proved in the proof of Theorem \ref{ringG103} and the other three are proven in the same way by using the universal coefficient theorem and Poincar\' e duality.
\end{remark}

\section{Acknowledgements}

This research was supported by the Science Fund of the Republic of Serbia,  Grant No. 7749891, Graphical Languages
- GWORDS.




\begin{thebibliography}{00}

 \bibitem[Basu, Chakraborty(2020)]{basu} S. Basu, P. Chakraborty, On the cohomology algebra and upper characteristic rank of
Grassmannian of oriented 3-planes, J. Homotopy Relat. Struct. 15 (2020), 27–60



 \bibitem[\v Cadek, Mimura, Van\v zura(2003)]{mimura} M. \v Cadek, M. Mimura, J. Van\v zura, The cohomology rings of real Stiefel manifolds
with integer coefficients, J. Math. Kyoto Univ. 43-2 (2003), 411–428

 \bibitem[Greub, Halperin, Vanstone(1976)]{greub} W. Greub, S. Halperin, R. Vanstone, Connections, curvature, and cohomology, Volume III: Cohomology of principal bundles and homogeneous spaces, Academic Press [Harcourt Brace Jovanovich, Publishers], New York-London, (1976)


 \bibitem[Kalafat, Yalçınkaya(2021)]{kalafat} M. Kalafat, E. Yalçınkaya, Algebraic topology of special Lagrangian manifolds, Indag. Math. 32 (2021) 579–597


 \bibitem[Korba\v s, Rusin(2016)]{korbas} J. Korba\v s, T. Rusin, A note on the $\mathbb Z_2$-cohomology algebra of oriented Grassmann manifolds, Rend. Circ. Mat. Palermo 2(65) (2016), 507–517


 \bibitem[Lai(1974)]{lai} H.-F. Lai, On the topology of the even-dimensional complex quadrics, Proc. Am. Math. Soc 46-3 (1974), 419–425 


 \bibitem[McCleary(2001)]{mccleary} J. McCleary, A user's guide to spectral sequences, Second Edition, Cambridge University Press, (1976) 

\bibitem[Switzer(1975)]{switzer} R. M. Switzer, Algebraic topology -- homotopy and homology, Classics in Mathematics, Springer-Verlag, Berlin, 2002. Reprint of the 1975 original.

 \bibitem[Vanzura(1999)]{vanzura} J. Van\v zura, The cohomology of $\widetilde G_{m,2}$ with integer coefficients, The 18th Winter School "Geometry and Physics" (Srn\' i, 1998). Rend. Circ. Mat. Palermo 2(Suppl. 59), (1999), 201-208






\end{thebibliography}



\end{document}